\font\elevensf=cmss10 scaled\magstephalf
\newtheorem{theorem} {{\elevensf THEOREM}}[section]
\newtheorem{proposition} {{\elevensf PROPOSITION}}[section]
\newtheorem{lemma} {{\elevensf LEMMA}}[section]
\newtheorem{corollary} {{\elevensf COROLLARY}}[section]
\newtheorem{example} {{\elevensf EXAMPLE}}[section]
\newtheorem{remark} {{\elevensf REMARK}}[section]
\newtheorem{definition} {{\elevensf DEFINITION}}[section]
\renewcommand\qed{$\blacksquare$}
\def\CC{{\rm \kern.24em \vrule width.02em height1.4ex depth-.05ex \kern-.26emC}}
\def\TagOnRight
\def\AA{{it I} \hskip-3pt{\tt A}}
\def\QQ{\rlap {\raise 0.4ex \hbox{$\scriptscriptstyle |$}} {\hskip -0.1em Q}}
\newcommand{\lb}{\left(}
\newcommand{\rb}{\right)}
\def\theequation{\@arabic{\c@section}.\@arabic{\c@equation}}
\begin{document}

\baselineskip 14pt
\parindent.4in
\catcode`\@=11 

\begin{center}

{\Huge \bf Bloch Wave Homogenization Relative to a Microstructure} \\[5mm]

{\bf Tuhin GHOSH and Muthusamy VANNINATHAN }\\[4mm]

\textit{Centre for Applicable Matematics, Tata Institute of Fundamental Research, India.}\\[2mm]

Email : \textit{vanni@math.tifrbng.res.in\ ,\ tuhin@math.tifrbng.res.in }

\end{center} 

\begin{abstract}
\noindent
In this work, we study the aspect of Bloch wave homogenization of the
new notion of convergence of microstructures represented by matrices $B^\epsilon$
related to the classical $H$-convergence of $A^\epsilon$ introduced in \cite{TG-MV-1}.
The new macro quantity $B^{\#}$ appears to incorporate the interaction between the two
microstructures $A^\epsilon,B^\epsilon$. Here we present its Bloch spectral representation 
along with the homogenization result. 
\end{abstract}

\begin{comment}
This work is about the its effects on various aspects : 
Considering an optimal control problem where both the state equation and the
cost functional are characterized by the family of micro-structures $A^{\epsilon}$ and
$B^{\epsilon}$ ($\epsilon>0$ is the small parameter), we derive the limiting cases 
under the framework of the Bloch wave spectral analysis. Corresponding, two macro quantities 
$A^{*}$ and $B^{\#}$ \cite{KV,KP} appear in the final analysis, we provide their spectral 
representation for the periodic and for the generalized Hashin-Shtrikman micro-structures \cite[Page no. 281]{T}.
\end{comment}

\vskip .5cm\noindent
{\bf Keywords:} Homogenization,  Bloch waves, Periodic structures, %Hashin-Shtrikman construction 
\vskip .5cm
\noindent
{\bf Mathematics Subject Classification:} 35B; 49J; 78M40

\section{Introduction}\label{c7en1}
\setcounter{equation}{0} 

We begin with recalling the new notion of convergence
related to the classical $H$-convergence introduced in \cite{TG-MV-1}.
We will address the homogenization issue with introducing the new limiting 
macro quantities and then will move into the aspect of Bloch wave homogenization
to give the spectral representation of the limiting macro quantities among the periodic microstructures.
%and general Hashin-Shtrikman microstructures. 

\begin{comment}
the homogenization of the optimal control problem
which has been studied by S.Kesavan and M.Vanninathan formally in the periodic case \cite{KV} 
and later with general micro-structures by S. Kesavan and Jaint Jean Paulin \cite{KP}.
Then we will move into the aspect of Bloch wave homogenization to give the spectral 
representation of the limiting macro quantities. 
\end{comment}

\subsection{Convergence Relative to a Microstructure}

Let us begin by recalling the notion of $H$-convergence \cite{T}.
Let $\mathcal{M}(\alpha, \beta;\Omega)$ with $0<\alpha<\beta$ denote the set of all
real $N\times N$ symmetric matrices $A(x)$ of functions defined almost everywhere 
on a bounded open subset $\Omega$ of $\mathbb{R}^N$ such that
if $A(x)=[a_{kl}(x)]_{1\leq k,l\leq N}\in \mathcal{M}(\alpha,\beta;\Omega)$ then 
\begin{equation*} a_{kl}(x)=a_{lk}(x)\ \forall l, k=1,..,N \ \mbox{and }\   (A(x)\xi,\xi)\geq \alpha|\xi|^2,\ |A(x)\xi|\leq \beta|\xi|,\  \forall\xi \in \mathbb{R}^N,\ \mbox{ a.e. }x\in\Omega.\end{equation*}
Let $A^\epsilon$ and $A^{*}$ belong to $\mathcal{M}(a_1,a_2, \Omega)$ with 
$0 < a_1 < a_2$. We say $A^\epsilon \xrightarrow{H} A^{*}$ or $H$-converges to 
a homogenized matrix $A^{*}$, if $A^\epsilon\nabla u^\epsilon \rightharpoonup A^{*}\nabla u$
in $L^2(\Omega)$ weak, for all test sequences $u^\epsilon$ satisfying 
\begin{equation}\begin{aligned}\label{c7ad13}
u^{\epsilon} &\rightharpoonup u \quad\mbox{weakly in }H^1(\Omega)\\
-div(A^\epsilon\mathbb\nabla u^\epsilon(x))& \mbox{ is strongly convergent in } H^{-1}(\Omega).
\end{aligned}\end{equation}
Convergence of the canonical energy densities follows as a consequence:
\begin{equation}\label{c7ad12} A^\epsilon\nabla u^\epsilon\cdot\nabla u^\epsilon  \rightharpoonup  A^{*}\nabla u\cdot\nabla u \mbox{ in } \mathcal{D}^\prime(\Omega). \end{equation}
Further, their integrals over $\Omega$ converge if $u^\epsilon$ and $u$, for example, lie in $H^1_0(\Omega)$ :
\begin{equation}\label{c7ad15} \int_\Omega A^\epsilon\nabla u^\epsilon\cdot\nabla u^\epsilon dx  \rightarrow \int_\Omega A^{*}\nabla u\cdot\nabla u\ dx. \end{equation}

Here, we are concerned with other oscillating quadratic energy
densities; more precisely, let us take another sequence of matrices $B^\epsilon$
and consider the corresponding energy density :
\begin{equation*}B^\epsilon \nabla u^\epsilon\cdot \nabla u^\epsilon\end{equation*}
and study its behaviour as $\epsilon$ tends to zero. Just like \eqref{c7ad12}, 
we may expect the appearance of new macro quantities in its weak limit. This motivates
the following notion:
\begin{definition}\label{c7sid}
Let $A^\epsilon$ $H$-converge to $A^{*}$. Let $B^\epsilon$  and $B^{\#}$
be given in $\mathcal{M}(b_1,b_2;\Omega)$ where $0 < b_1 < b_2$. We say $B^\epsilon$ 
converges to $B^{\#}$ relative to $A^\epsilon$ (denoted $B^\epsilon\xrightarrow{A^\epsilon}B^{\#}$) if for all test sequences $u^\epsilon$
satisfying \eqref{c7ad13} we have 
\begin{equation}\label{c7ad17} B^\epsilon\nabla u^\epsilon\cdot\nabla u^\epsilon  \rightharpoonup  B^{\#}\nabla u\cdot\nabla u \mbox{ in } \mathcal{D}^\prime(\Omega). \end{equation}
\end{definition}
\noindent
Further, analogous to \eqref{c7ad15}, we have
\begin{equation*} \int_\Omega B^\epsilon\nabla u^\epsilon\cdot\nabla u^\epsilon dx  \rightarrow \int_\Omega B^{\#}\nabla u\cdot\nabla u dx. \end{equation*}
if $ u^\epsilon$ and $u$ belong to $H^1_0(\Omega)$.\\
\\
The significance of the limit $B^{\#}$ in the context of Calculus of Variation 
has been established in \cite{TG-MV-1}.

\noindent
Prior to that, we introduce the correctors or oscillatory test functions to define the macro quantities $A^{*},B^{*}$ and $B^{\#}$. 
\subsection{Expression of $B^{\#}$ through `oscillatory test functions' :}
\paragraph{Definition of $B^{\#}$ :}
Let $\{e_k\}_k,k=1,2,..,N$ be the standard basis vectors in $\mathbb{R}^N$.  
We define the oscillatory test functions $\chi_{k}^{\epsilon}$, $\zeta_{k}^{\epsilon}$ and 
$\psi_{k}^{\epsilon}$ in $H^1(\Omega)$ to define $A^{*}$, $B^{*}$, $B^{\#}$ 
as follows. Let $A^\epsilon$ $H$- converges to $A^{*}$, then upto a subsequence still denoted by $A^\epsilon$, there exist 
a sequence $\{\chi_k^\epsilon\}_k\in H^1(\Omega)$ such that
\begin{equation*}\begin{aligned}
\chi_{k}^{\epsilon} &\rightharpoonup 0 \mbox{ weakly in }H^1(\Omega),\\ 
 A^{\epsilon}(\nabla\chi_{k}^{\epsilon} + e_k ) &\rightharpoonup A^{*}{e_k} \mbox{ weakly in }L^2(\Omega) \mbox{ with }\\
 -div (A^{\epsilon}(\nabla\chi_{k}^{\epsilon} + e_k )) &= -div(A^{*}{e_k})\ \mbox{ in }\Omega, \ \ k=1,2,..,N.
\end{aligned}\end{equation*}
We consider the matrix $X^{\epsilon}$ defined by its columns $(\nabla\chi_{k}^{\epsilon})$
is called the corrector matrix for $A^{\epsilon}$, with the following property :
\begin{equation}\label{c7ot9}\nabla u^{\epsilon} - (X^{\epsilon}+I)\nabla u \rightarrow 0 \quad\mbox{ in }L_{loc}^1(\Omega).\end{equation}
The existence of such sequence $\{\chi_{k}^{\epsilon}\}$ is well known in homogenization theory, for more details one may look at \cite{A,T}.\\
\\
Similarly, let $B^\epsilon$ $H$-converges to $B^{*}$, then upto a subsequence still denoted by $B^\epsilon$, we define the corrector matrix $Y^{\epsilon}$ defined by its columns 
$(\nabla\zeta_{k}^{\epsilon})$ satisfying
\begin{equation*}\begin{aligned}
\zeta_{k}^{\epsilon} &\rightharpoonup 0 \mbox{ weakly in }H^1(\Omega),\\
 B^{\epsilon}(\nabla\zeta_{k}^{\epsilon} + e_k ) &\rightharpoonup B^{*}e_k \mbox{ weakly in }L^2(\Omega) \mbox{ with }\\
-div(B^{\epsilon}(\nabla\zeta_{k}^{\epsilon} + e_k )) &= -div(B^{*}e_k)\ \mbox{ in }\Omega, \ \ k=1,2,..,N.
\end{aligned}\end{equation*}
And finally we define the test functions $\psi_{k}^{\epsilon}$ bounded uniformly with respect to $\epsilon$ in $H^1(\Omega)$, satisfying
\begin{equation*} div(A^{\epsilon} \nabla \psi_{k}^{\epsilon}  - B^{\epsilon}(\nabla\chi_{k}^{\epsilon} + e_k)) =0\ \mbox{ in }\Omega, \ \ k=1,2,..,N.\end{equation*}
Such test functions $\{\psi_{k}^{\epsilon}\}$ has been introduced in \cite{KP,KV} subject to an optimal control problem.
Then upto a subsequence we consider the limit as 
\begin{equation*}\begin{aligned}
\psi_{k}^{\epsilon} &\rightharpoonup \psi_{k} \mbox{ weakly in }H^1(\Omega),\\
A^{\epsilon}\nabla \psi_{k}^{\epsilon}  - B^{\epsilon}(\nabla\chi_{k}^{\epsilon} + e_k) &\rightharpoonup\ \varsigma_k \mbox{\ (say) weakly in }L^2(\Omega)\mbox{ with }\\
div(\varsigma_k) &=0\ \mbox{ in }\Omega, \ \ k=1,2,..,N.
\end{aligned}\end{equation*}
Following that we define the limiting matrix $B^{\#}$ : For each $k=1,2,..,N$ 
\begin{equation}\label{c7Sd2}
 B^{\#}e_k :=  A^{*}\nabla \psi_{k} - \varsigma_k = A^{*}\nabla \psi_{k}- lim \{ A^{\epsilon}\nabla \psi_{k}^{\epsilon}  - B^{\epsilon}(\nabla\chi_{k}^{\epsilon} + e_k)\}; 
\end{equation}
and as a perturbation of $H$-limit $B^{*}$ we write
\begin{equation*}
B^{\#}e_k = B^{*}e_k +  A^{*}\nabla \psi_k - lim \{ A^{\epsilon} \nabla \psi_k^\epsilon - B^{\epsilon}(\nabla\chi_k^{\epsilon} - \nabla\zeta_k^{\epsilon})\}. 
\end{equation*}
\ \ \ \ (The above limits are to be understood as  $L^2(\Omega)$ weak limit).
\subsection{Convergence of flux and energy expressions in terms of $B^{\#}$ :}
Let us introduce the state equation as follows. Let $u^\epsilon\in H^1_0(\Omega)$ solves, 
\begin{equation}\label{c7Sd8}
\mbox{(State equation) : }\quad -div(A^\epsilon\nabla u^\epsilon) = f \in H^{-1}(\Omega), \mbox{ and } u^\epsilon = 0 \mbox{ on }\partial\Omega. 
\end{equation}
Let $p^\epsilon \in H^1_0(\Omega)$ solves,
\begin{equation}\label{c7sp2}
\mbox{(Adjoint-State equation) : }\quad div(A^{\epsilon}\nabla p^{\epsilon} - B^{\epsilon}\nabla u^{\epsilon}) =\ 0 \mbox{ in }\Omega, \mbox{ and } p^\epsilon = 0  \mbox{ on }\partial\Omega. 
\end{equation}
$(u^\epsilon, p^\epsilon)\rightharpoonup (u,p)$ in $H^1_0(\Omega)\times H^1_0(\Omega)$; the fluxes $\sigma^\epsilon=A^\epsilon\nabla u^\epsilon\rightharpoonup \sigma=A^{*}\nabla u$ weakly in $L^2(\Omega)$ and the new flux  $z^{\epsilon} = A^{\epsilon}\nabla p^{\epsilon} - B^{\epsilon}\nabla u^{\epsilon}
\rightharpoonup  z$ (say) weakly in  $L^2(\Omega)$.\\
\\
We give the characterization of $z$ in terms of the macro limits.\\
\\
We apply the well-known div-curl lemma \cite{T} several times to simply have the following convergences :
\begin{equation*} (A^{\epsilon}\nabla \psi_{k}^{\epsilon}- B^{\epsilon}(\nabla\chi_{k}^{\epsilon} + {e_k}))\cdot (\nabla\chi_{k}^{\epsilon} + {e_k})\rightharpoonup  (A^{*}\nabla \psi_{k}- B^{\#}e_k)\cdot e_k \ \mbox{ in }\mathcal{D}^{\prime}(\Omega)  \end{equation*}
and
%By div-curl lemma for the sequences $A^\epsilon(\nabla\chi_{k}^{\epsilon} + {e_k}))$ whose divergence is in $H^{-1}(\Omega)$ compact and $\nabla \psi_{k}^{\epsilon}$ whose curl is $0$; we have 
\begin{equation*}A^{\epsilon}(\nabla\chi_{k}^{\epsilon} + {e_k})\cdot \nabla\psi_{k}^{\epsilon}\rightharpoonup  A^{*}e_k\cdot\nabla\psi_{k} \ \mbox{ in }\mathcal{D}^{\prime}(\Omega).  \end{equation*}
Since $A^\epsilon = (A^\epsilon)^t$ and $A^{*}=(A^{*})^t$, thus combining the above two convergences we obtain
\begin{equation}\label{c7FL12} B^{\epsilon}(\nabla\chi_{k}^{\epsilon} + {e_k})\cdot (\nabla\chi_{k}^{\epsilon} + {e_k})\rightharpoonup B^{\#}e_k\cdot e_k  \ \mbox{ in }\mathcal{D}^{\prime}(\Omega), \ k=1,2..,N.  \end{equation}
On the other hand, thanks to div-curl lemma we also have 
%Let us consider the sequence $z^\epsilon$ whose divergence is $0$ and $(\nabla\chi_{k}^{\epsilon} + {e_k})$ whose curl is $0$, then again by applying div-curl lemma we obtain
\begin{equation*} (A^\epsilon\nabla p^\epsilon- B^\epsilon\nabla u^\epsilon)\cdot (\nabla\chi_{k}^{\epsilon} + {e_k})\rightharpoonup  z\cdot e_k \ \mbox{ in }\mathcal{D}^{\prime}(\Omega)  \end{equation*}
and
%Again by using the div-curl lemma for the first term in the left hand side,  we have
\begin{equation*} A^\epsilon (\nabla\chi_{k}^{\epsilon} + {e_k})\cdot\nabla p^\epsilon \rightharpoonup  A^{*}e_k\cdot\nabla p \ \mbox{ in }\mathcal{D}^{\prime}(\Omega).  \end{equation*}
Thus one gets,
\begin{equation}\label{c7eir} B^\epsilon\nabla u^\epsilon\cdot (\nabla\chi_{k}^{\epsilon} + {e_k})\rightharpoonup A^{*}\nabla p\cdot e_k- z\cdot e_k \ \mbox{ in }\mathcal{D}^{\prime}(\Omega). \end{equation}
Similarly, having 
%Now, considering the sequences $ (A^{\epsilon}\nabla \psi_{k}^{\epsilon}- B^{\epsilon}(\nabla\chi_{k}^{\epsilon} + {e_k}))$ and $\nabla u^\epsilon$ and applying the div-curl lemma once more, we get
\begin{equation*} (A^{\epsilon}\nabla \psi_{k}^{\epsilon}- B^{\epsilon}(\nabla\chi_{k}^{\epsilon} + {e_k}))\cdot \nabla u^{\epsilon}\rightharpoonup  (A^{*}\nabla \psi_{k}- B^{\#}e_k)\cdot\nabla u \ \mbox{ in }\mathcal{D}^{\prime}(\Omega)  \end{equation*}
and
%Again by using the div-curl lemma for the first term in the left hand side, we have 
\begin{equation*} A^\epsilon\nabla u^\epsilon\cdot\nabla \psi_{k}^{\epsilon} \rightharpoonup  A^{*}\nabla u\cdot\nabla \psi_k \ \mbox{ in }\mathcal{D}^{\prime}(\Omega);  \end{equation*}
One obtains,
\begin{equation}\label{c7FG13} B^\epsilon (\nabla\chi_{k}^{\epsilon} + {e_k})\cdot \nabla u^\epsilon\rightharpoonup B^{\#}e_k\cdot\nabla u \ \mbox{ in }\mathcal{D}^{\prime}(\Omega). \end{equation}
Now by simply combining \eqref{c7eir} and \eqref{c7FG13}, we determine the expression of $z$ : 
\begin{equation*} z\cdot e_k = (A^{*}\nabla p -B^{\#}\nabla u)\cdot e_k, \ \ k=1,2,..N.\end{equation*}
Thus, 
$ z = A^{*}\nabla p -B^{\#}\nabla u $. Since $div\ z^\epsilon =0$ and $z^\epsilon \rightharpoonup z\ \mbox{ in }L^2(\Omega) \mbox{ weak }$, so $div\ z=0$.\\
\\
Following that, we have the energy convergence to have :
\begin{equation*} B^\epsilon\nabla u^\epsilon\cdot\nabla u^\epsilon =\ A^\epsilon\nabla u^\epsilon\cdot\nabla p^\epsilon - z^\epsilon\cdot\nabla u^\epsilon \rightharpoonup A^{*}\nabla u\cdot\nabla p - z\cdot\nabla u = B^{\#}\nabla u\cdot\nabla u\ \mbox{ in } \mathcal{D}^\prime(\Omega).\end{equation*}
Thus, we have obtained the homogenized equations as, $u,p\in H^1_0(\Omega)$ solve :
\begin{align}
&\mbox{(Homogenized State equation) : } -div(A^{*}\nabla u) = f \in H^{-1}(\Omega), \mbox{ and } u = 0 \mbox{ on }\partial\Omega ; \label{c7Sd1}\\
&\mbox{(Homogenized Adjoint-State equation) : } div(A^{*}\nabla p - B^{\#}\nabla u) =\ 0 \mbox{ in }\Omega, \mbox{ and } p = 0  \mbox{ on }\partial\Omega. \label{c7Sd7}
\end{align}

Here in this article, we would like to obtain the above limiting 
system \eqref{c7Sd1},\eqref{c7Sd7} in a different approach, while the previous methods
were based on physical space, the present one is based on Fourier techniques. 
Towards this direction, Conca and Vanninathan in \cite{CV} gave a 
new proof of weak convergence for the homogenization problem of 
elliptic operators with periodically oscillating coefficients by 
using the so called Bloch wave spectral analysis. 
Following its subsequent development in \cite{AC,COV,COV2,GV} etc.,
in a recent work \cite{TV1,TV2} we extend this idea in one non-periodic 
case of generalized Hashin-Shtrikman micro-structures. 
Here in the case of periodic microstructures we derive the homogenization results with providing
the Bloch spectral representation for the macro quantities $A^{*}$, $B^{*}$, $B^{\#}$.

The plan of the paper is as follows. In Section \ref{c7en2}, we will discuss few key properties of the new macro limit $B^{\#}$.
Following that, in Section \ref{c7en3} we will be presenting the explicit expression of $B^{\#}$ in the periodic microstructures.
%and non-periodic Hashin-Shtrikman cases; see  \eqref{c7th},\eqref{c7hse}. 
In Section \ref{c7en4}, we will introduce the Bloch waves and
the Bloch spectral elements to represent the macro quantities. %see  \eqref{c7rpb1},\eqref{c7rpb2}.
Finally, in the Section \ref{c7en5} we will establish the main homogenization results. 

In a final remark, we wish to point out here the additional difficulty in the homogenization 
of \eqref{c7sp2} via Bloch wave method. The first Bloch transform $B_1^{\epsilon}$ associated 
with $A^{\epsilon}$ does not commute with $B^{\epsilon}$. The macro quantity $B^{\#}$ is the
outcome of this non-commutativity.

\section{Properties of $B^{\#}$}\label{c7en2}
\setcounter{equation}{0} 
Let us first define $B^{\#}$ element wise i.e. to define $(B^{\#})_{lk}=B^{\#}e_l\cdot e_k.$ 
We consider the sequences  $(A^{\epsilon}\nabla\psi_{k}^{\epsilon} - B^{\epsilon}(\nabla\chi_{k}^{\epsilon} + e_k))$ and $(\nabla\chi_{l}^{\epsilon} +{e_l})$ 
where $e_k,e_l\in\mathbb{R}^N$ are the canonical basis vectors, then by applying the div-curl lemma as before,
we have the following elements wise convergence
\begin{equation}\label{c7dc2}
B^{\epsilon}(\nabla\chi_k^{\epsilon}+e_k)\cdot(\nabla\chi_l^{\epsilon}+ e_l) \rightharpoonup  B^{\#}e_k\cdot e_l\quad\mbox{in }\mathcal{D}^{\prime}(\Omega)
\end{equation}
\begin{remark}
Notice that if $A^{\epsilon}$ is independent of $\epsilon$ i.e. $A^{\epsilon} = A$ then we have $X^{\epsilon}=0$, so $B^{\#} = \overline{B}$, just a constant;
where, $\overline{B}$ is the $L^{\infty}(\Omega)$ weak* limit of $B^{\epsilon}$.
\hfill\qed
\end{remark}
\noindent
As an application of the above distributional convergence \eqref{c7dc2} one has the following :
\begin{enumerate}
\item[(i)]
If $\{B^{\epsilon}\}_{\epsilon>0}$ is symmetric, then $B^{\#}$ is also a symmetric matrix.
\item[(ii)] Let $B^{\epsilon} \in \mathcal{M}(b_1,b_2;\ \Omega)$, then the ellipticity constant of $B^{\#}$ remains same; 
and $B^{\#} \in \mathcal{M}(b_1,\widetilde{b_2};\ \Omega)$ and for some $\widetilde{b_2} \geq b_2.$ If $A^\epsilon\in\mathcal{M}(a_1,a_2;\Omega)$,
then one chooses $\widetilde{b_2} = b_2\frac{a_2}{a_1}$ (cf.\eqref{c7Sd3}).
Note that, for being $B^\epsilon$ is symmetric
the homogenized tensor $B^{*}\in\mathcal{M}(b_1,b_2;\ \Omega)$. 
\end{enumerate}
Let $\lambda=(\lambda_1,\lambda_2,..,\lambda_N)\in\mathbb{R}^N$ be an arbitrary vector, we define the corresponding oscillatory test function 
$\chi^\epsilon_\lambda= \sum_{k=1}^N \lambda_k\chi^\epsilon_k\in H^1(\Omega)$ and $\zeta^\epsilon_\lambda= \sum_{k=1}^N \lambda_k\zeta^\epsilon_k\in H^1(\Omega)$ to have 
\begin{equation}\label{c7ot11}B^{\#}\lambda\cdot\lambda =\ limit\ B^{\epsilon}(\nabla\chi_{\lambda}^{\epsilon}+ \lambda)\cdot(\nabla\chi_{\lambda}^{\epsilon}+\lambda);\end{equation}
and as a perturbation of the $H$-limit $B^{*}$,  
\begin{equation}\label{c7os18}
B^{\#}\lambda\cdot {\lambda}=\ B^{*}\lambda\cdot {\lambda} + limit\ B^{\epsilon}(\nabla\chi_{\lambda}^{\epsilon} - \nabla\zeta_{\lambda}^{\epsilon})\cdot(\nabla\chi_{\lambda}^{\epsilon} - \nabla\zeta_{\lambda}^{\epsilon}). \\                 
\end{equation}
\ \ \ (The above `limits' are to be understood in the sense of distribution.) \\
\begin{corollary}
\begin{equation}\label{c7Sd4}
B^{\#} \geq B^{*}
\end{equation}
where the equality holds if and only if $\nabla(\chi_{\lambda}^{\epsilon} - \zeta_{\lambda}^{\epsilon})\rightarrow 0 $ in $L^2(\Omega)$ for each $\lambda\in\mathbb{R}^N.$
\hfill\qed\end{corollary}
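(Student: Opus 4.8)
The plan is to deduce the inequality directly from the perturbation formula \eqref{c7os18}. Writing $w_\lambda^\epsilon := \chi_\lambda^\epsilon - \zeta_\lambda^\epsilon$, formula \eqref{c7os18} asserts that
\begin{equation*}
B^{\#}\lambda\cdot\lambda - B^{*}\lambda\cdot\lambda = \mathrm{limit}\ B^\epsilon \nabla w_\lambda^\epsilon\cdot\nabla w_\lambda^\epsilon \quad\mbox{in }\mathcal{D}'(\Omega).
\end{equation*}
Since $B^\epsilon\in\mathcal{M}(b_1,b_2;\Omega)$ we have the pointwise bound $B^\epsilon\nabla w_\lambda^\epsilon\cdot\nabla w_\lambda^\epsilon \geq b_1|\nabla w_\lambda^\epsilon|^2 \geq 0$ almost everywhere. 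Testing against an arbitrary nonnegative $\varphi\in\mathcal{D}(\Omega)$ and passing to the limit, the left side converges to $(B^{\#}\lambda\cdot\lambda - B^{*}\lambda\cdot\lambda)\int_\Omega\varphi$ while the right side stays $\geq 0$; hence $B^{\#}\lambda\cdot\lambda\geq B^{*}\lambda\cdot\lambda$ for every $\lambda\in\mathbb{R}^N$, which is \eqref{c7Sd4}. (Here I am using that $B^{\#}$ and $B^{*}$ are constant matrices on $\Omega$ in the periodic setting, or more generally that the distributional limit identity determines the quadratic form; the sign is what matters.)

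For the equality characterization, the ``if'' direction is immediate: if $\nabla w_\lambda^\epsilon\to 0$ strongly in $L^2(\Omega)$ for each $\lambda$, then $B^\epsilon\nabla w_\lambda^\epsilon\cdot\nabla w_\lambda^\epsilon\to 0$ strongly in $L^1(\Omega)$ (product of a bounded sequence with one converging to $0$), so the distributional limit in \eqref{c7os18} vanishes and $B^{\#}\lambda\cdot\lambda = B^{*}\lambda\cdot\lambda$; polarization then gives $B^{\#}=B^{*}$. For the ``only if'' direction, suppose $B^{\#}\lambda\cdot\lambda = B^{*}\lambda\cdot\lambda$. Then the nonnegative sequence $g^\epsilon := B^\epsilon\nabla w_\lambda^\epsilon\cdot\nabla w_\lambda^\epsilon$ converges to $0$ in $\mathcal{D}'(\Omega)$; being nonnegative and bounded in $L^1_{loc}$, it converges to $0$ in $L^1_{loc}(\Omega)$ (a nonnegative distribution is a measure, and $\mathcal{D}'$-convergence to $0$ of nonnegative measures with the limit absolutely continuous forces the masses on compact subsets to $0$). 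From $g^\epsilon\geq b_1|\nabla w_\lambda^\epsilon|^2\geq 0$ we get $\nabla w_\lambda^\epsilon\to 0$ in $L^2_{loc}(\Omega)$. To upgrade $L^2_{loc}$ to $L^2(\Omega)$ one uses the uniform $H^1(\Omega)$-bound on $\chi_\lambda^\epsilon$ and $\zeta_\lambda^\epsilon$ together with an argument near $\partial\Omega$; in the periodic case one can alternatively argue directly on the unit cell where the correctors are $Y$-periodic, so local and global $L^2$ convergence of gradients coincide.

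The main obstacle is the last point: extracting \emph{strong} $L^2$ convergence of $\nabla w_\lambda^\epsilon$ from the vanishing of a distributional (energy) limit. The subtlety is that $\mathcal{D}'$-convergence to zero of $g^\epsilon = B^\epsilon\nabla w_\lambda^\epsilon\cdot\nabla w_\lambda^\epsilon$ a priori controls only a weak object; one must exploit the sign $g^\epsilon\geq 0$ to turn it into $L^1_{loc}$ (hence $L^2_{loc}$ for the gradients) convergence, and then handle the boundary layer to reach $L^2(\Omega)$. I expect the cleanest route is to phrase everything with $\chi_\lambda^\epsilon,\zeta_\lambda^\epsilon$ extended by their periodic structure so that there is genuinely no boundary effect, reducing the statement to: a nonnegative sequence tending to $0$ in $\mathcal{D}'$, and bounded below by $b_1|\nabla w_\lambda^\epsilon|^2$, forces $\nabla w_\lambda^\epsilon\to 0$ in $L^2$. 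Everything else (the inequality \eqref{c7Sd4} itself and the ``if'' direction of equality) is routine once \eqref{c7ot11}--\eqref{c7os18} and the ellipticity bounds from Section \ref{c7en2} are in hand.
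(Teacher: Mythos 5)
Your argument is exactly the paper's (implicit) one: the corollary is stated as an immediate consequence of \eqref{c7os18}, the correction term there being the distributional limit of the nonnegative quantity $B^{\epsilon}\nabla(\chi_{\lambda}^{\epsilon}-\zeta_{\lambda}^{\epsilon})\cdot\nabla(\chi_{\lambda}^{\epsilon}-\zeta_{\lambda}^{\epsilon})$, and the two-sided bound $b_1|\nabla(\chi_{\lambda}^{\epsilon}-\zeta_{\lambda}^{\epsilon})|^2\leq B^{\epsilon}\nabla(\chi_{\lambda}^{\epsilon}-\zeta_{\lambda}^{\epsilon})\cdot\nabla(\chi_{\lambda}^{\epsilon}-\zeta_{\lambda}^{\epsilon})\leq b_2|\nabla(\chi_{\lambda}^{\epsilon}-\zeta_{\lambda}^{\epsilon})|^2$ gives both the inequality and the equality characterization, just as you do. The one caveat you rightly flag --- that the vanishing of the distributional limit by itself only yields $\nabla(\chi_{\lambda}^{\epsilon}-\zeta_{\lambda}^{\epsilon})\rightarrow 0$ in $L^2_{loc}(\Omega)$, with the upgrade to $L^2(\Omega)$ needing equi-integrability or periodicity --- is left untreated by the paper as well, which states the corollary without proof; in the periodic setting where the result is used it is harmless, exactly as you observe.
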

\noindent
Prior to that, in the following result, we provide the general bounds on $B^{\#}$.  
\begin{lemma}[\textbf{General Bounds}]\label{c7hsk}
Let $A^{\epsilon}\in \mathcal{M}(a_1,a_2;\Omega)$ with $0 <a_1\leq a_2 <\infty $ and $B^{\epsilon}\in\mathcal{M}(b_1,b_2;\ \Omega)$ with $0 <b_1\leq b_2 <\infty $,
$H$-converges to $A^{*}\in\mathcal{M}(a_1,a_2;\ \Omega)$ and $B^{*}\in\mathcal{M}(b_1,b_2;\ \Omega)$ respectively.
Then we have the following bounds 
\begin{equation}\label{c7Sd3} b_1I\leq \underline{B} \leq B^{*} \leq B^{\#} \leq \frac{b_2}{a_1}A^{*} \leq \frac{b_2}{a_1}\overline{A}\leq b_2\frac{a_2}{a_1}I\end{equation}
where, $(\underline{B})^{-1}$ is the $L^{\infty}$ weak* limit of the matrix sequence $(B^{\epsilon})^{-1}$ and $\overline{A}$ is the $L^{\infty}$ weak* limit of the matrix sequence $A^{\epsilon}$.
\end{lemma}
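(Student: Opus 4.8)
The plan is to prove the chain \eqref{c7Sd3} link by link, isolating the one genuinely new inequality and treating the rest by classical facts. The two outer links are elementary. From $B^{\epsilon}\geq b_1 I$ we get $(B^{\epsilon})^{-1}\leq b_1^{-1}I$ a.e., and passing to the $L^{\infty}$ weak* limit gives $(\underline{B})^{-1}\leq b_1^{-1}I$, i.e. $\underline{B}\geq b_1 I$. Likewise, from $A^{\epsilon}\leq a_2 I$ a.e. we get $\overline{A}\leq a_2 I$ for the weak* limit, hence $\tfrac{b_2}{a_1}\overline{A}\leq \tfrac{b_2 a_2}{a_1}I$. The two ``$H$-limit'' links $\underline{B}\leq B^{*}$ and $A^{*}\leq \overline{A}$ are precisely the classical harmonic-mean and arithmetic-mean bounds for $H$-limits of symmetric matrices (all matrices here are symmetric); I would recall the standard argument: for a positive definite matrix $M$ one has $Mv\cdot v\geq 2Mv\cdot\mu-M\mu\cdot\mu$ for every constant vector $\mu$, applied with $v=\nabla\zeta_{\lambda}^{\epsilon}+\lambda$ (resp.\ with $(B^{\epsilon})^{-1}$ and $v=B^{\epsilon}(\nabla\zeta_{\lambda}^{\epsilon}+\lambda)$ for the lower bound), then pass to the limit using the div--curl lemma on the corrector fields and optimize over $\mu$. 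Finally, the link $B^{*}\leq B^{\#}$ is exactly Corollary~\eqref{c7Sd4}, already established, and $\tfrac{b_2}{a_1}B^{*}\le\tfrac{b_2}{a_1}\overline A$ follows from $A^*\le\overline A$ after scaling.

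The only new content is the middle link $B^{\#}\leq \tfrac{b_2}{a_1}A^{*}$. Here I would use the oscillatory test functions $\chi_{\lambda}^{\epsilon}$ associated with $A^{\epsilon}$. Fix $\lambda\in\mathbb{R}^N$; then a.e. in $\Omega$,
\begin{equation*}
B^{\epsilon}(\nabla\chi_{\lambda}^{\epsilon}+\lambda)\cdot(\nabla\chi_{\lambda}^{\epsilon}+\lambda)\ \leq\ b_2\,|\nabla\chi_{\lambda}^{\epsilon}+\lambda|^{2}\ \leq\ \frac{b_2}{a_1}\,A^{\epsilon}(\nabla\chi_{\lambda}^{\epsilon}+\lambda)\cdot(\nabla\chi_{\lambda}^{\epsilon}+\lambda),
\end{equation*}
using $B^{\epsilon}\leq b_2 I$ and $A^{\epsilon}\geq a_1 I$. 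By \eqref{c7ot11} the left-hand side converges in $\mathcal{D}^{\prime}(\Omega)$ to $B^{\#}\lambda\cdot\lambda$, and by the classical corrector energy identity (div--curl lemma applied to the field $A^{\epsilon}(\nabla\chi_{\lambda}^{\epsilon}+\lambda)$, whose divergence is fixed, paired against the gradient $\nabla\chi_{\lambda}^{\epsilon}+\lambda\rightharpoonup\lambda$) the right-hand side converges in $\mathcal{D}^{\prime}(\Omega)$ to $\tfrac{b_2}{a_1}A^{*}\lambda\cdot\lambda$.

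To conclude I would test the inequality above only against nonnegative $\varphi\in\mathcal{D}(\Omega)$ and pass to the limit, obtaining $B^{\#}\lambda\cdot\lambda\leq \tfrac{b_2}{a_1}A^{*}\lambda\cdot\lambda$ in the sense of distributions; since by property (ii) both $B^{\#}$ and $A^{*}$ are $L^{\infty}(\Omega)$ matrix fields, this is an almost-everywhere scalar inequality. Letting $\lambda$ range over a countable dense subset of $\mathbb{R}^N$ and using continuity in $\lambda$ of both quadratic forms upgrades this to the matrix inequality $B^{\#}(x)\leq \tfrac{b_2}{a_1}A^{*}(x)$ for a.e.\ $x\in\Omega$; chaining all the links yields \eqref{c7Sd3}. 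The main (and in fact only) delicate point is this last passage: the energy densities converge merely in $\mathcal{D}^{\prime}(\Omega)$, so one must not pass to the limit in the pointwise inequality directly, but rather test against nonnegative functions and then invoke the $L^{\infty}$ bounds on $B^{\#}$ and $A^{*}$ to recover an a.e.\ matrix inequality; the remaining steps are bookkeeping with the classical Murat--Tartar bounds.
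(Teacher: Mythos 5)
Your proposal is correct and takes essentially the same route as the proof the paper relies on (it defers to \cite{TG-MV-1}): the outer links and $\underline{B}\leq B^{*}$, $A^{*}\leq\overline{A}$ are the classical bounds, $B^{*}\leq B^{\#}$ is the stated corollary of \eqref{c7os18}, and the key middle link is obtained exactly as in the source by sandwiching $B^{\epsilon}(\nabla\chi_{\lambda}^{\epsilon}+\lambda)\cdot(\nabla\chi_{\lambda}^{\epsilon}+\lambda)\leq b_2|\nabla\chi_{\lambda}^{\epsilon}+\lambda|^{2}\leq \frac{b_2}{a_1}A^{\epsilon}(\nabla\chi_{\lambda}^{\epsilon}+\lambda)\cdot(\nabla\chi_{\lambda}^{\epsilon}+\lambda)$ and passing to the distributional limit via \eqref{c7ot11} and the div--curl energy convergence. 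Your additional care in testing against nonnegative $\varphi\in\mathcal{D}(\Omega)$ and upgrading to an a.e.\ matrix inequality over a dense set of $\lambda$ is a sound, routine tightening of the same argument.
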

\begin{proof}
See \cite{TG-MV-1}. 
\hfill\end{proof}
\section{Integral representation of $B^{\#}$ in periodic medium}\label{c7en3}
\setcounter{equation}{0}
Here we will consider the periodic medium %and non-periodic Hashin-Shtrikman constructions respectively 
and will provide the integral representation of the macro quantities.\\
\textbf{Case (1)\ $A$ and $B$ are both periodic with same periodicity :}
Let $Y$ denotes the unit cube $[0,1]^N$ in $\mathbb{R}^N$.
Let $A(y)=[a_{kl}(y)]_{1\leq k,l\leq N}\in \mathcal{M}(a_1,a_2;\ Y)$  be such that $a_{kl}(y)$ are $Y$-periodic functions
$ \forall k,l =1,2..,N.$ Now we set 
\begin{equation*}A^{\epsilon}(x) = [a_{kl}^{\epsilon}(x)]= [a_{kl}(\frac{x}{\epsilon})]\end{equation*} and extend 
it to the whole $\mathbb{R}^N$ by $\epsilon$-periodicity with a small period of scale $\epsilon$ 
and then restricting $A^{\epsilon}$ in particular on $\Omega$ is known as periodic micro-structures.\\
The homogenized conductivity $A^{*}=[a^{*}_{kl}]$ is defined by its entries
\begin{equation}\label{c7OP8}
a^{*}_{kl} = \int_Y a_{kl}(y)(\nabla_y\chi_k + e_k)\cdot(\nabla_y\chi_l + e_l) dy 
\end{equation}
where we define the $\chi_k$ through the so-called cell-problems. 
For each unit vector $e_k$, consider the following conductivity problem in the periodic unit cell : 
\begin{equation}\label{c7hsr}
-div_y\ a_{kl}(y)(\nabla_y\chi_k(y)+e_k) = 0 \quad\mbox{in }Y,\quad y \rightarrow\chi_k(y) \quad\mbox{is $Y$-periodic. }
\end{equation}
Similarly, the homogenized conductivity $B^{*}=[b^{*}_{kl}]$ is defined by its entries
\begin{equation}\label{c7hss}
b^{*}_{kl} = \int_Y b_{kl}(y)(\nabla_y\zeta_k + e_k)\cdot (\nabla_y\zeta_l + e_l) dy 
\end{equation}
where $\zeta_k$ solves, 
\begin{equation}
-div_y\ b_{kl}(y)(\nabla_y\zeta_k(y)+e_k) = 0 \quad\mbox{in }Y,\quad y \rightarrow\zeta_k(y) \quad\mbox{is $Y$-periodic. }
\end{equation} 
Finally, we consider the cell problem for $\psi_k(y)$, satisfies  
\begin{equation}\label{c7td}
div_y\ \lb a_{kl}(y)(\nabla\psi_k ) - b_{kl}(y)(\nabla\chi_k + e_k)\rb = 0 \quad\mbox{in }Y,\quad y \rightarrow\psi_k(y) \quad\mbox{is $Y$-periodic. }
\end{equation}
We can assume that means of the cell solutions are zero, i.e. $\int_Y \chi_k dy =\int_Y \zeta_k dy = \int_Y \psi_k dy =0 $
which provides the unique solution of the cell-problems also it gives the  $L^{\infty}$ weak*  limits of the sequences $(\chi_k^{\epsilon})_{\epsilon}= (\chi_k(\frac{x}{\epsilon}))_{\epsilon}, 
(\zeta_k^{\epsilon})_{\epsilon}=(\zeta_k(\frac{x}{\epsilon}))_{\epsilon}, (\psi_k^{\epsilon})_{\epsilon}=(\psi_k(\frac{x}{\epsilon}))_{\epsilon}$ are all $0$.\\
(Since $f(\frac{x}{\epsilon})\rightharpoonup \frac{1}{|Y|}\int_Y f(y)dy$ in $L^{\infty}$ weak* for $Y$ periodic $f$.)
Thus, $B^{\#}=[b^{\#}_{kl}]$
\begin{equation*}\begin{aligned}
b^{\#}_{jk} &= lim[ b^{\epsilon}_{ij}\frac{\partial}{\partial x_i}(\chi_k^{\epsilon} + x_k) - a_{ij}^{\epsilon}\frac{\partial}{\partial x_i}(\psi_k^{\epsilon})]\\
&=\int_{Y}[b_{ij}(y)\frac{\partial}{\partial y_i}(\chi_k(y) + y_k) - a_{ij}(y)\frac{\partial}{\partial y_i}(\psi_k)]dy.
\end{aligned}\end{equation*}
By using the property \eqref{c7dc2} we can also write,
\begin{equation}\label{c7th}
b^{\#}_{jk} =  \int_{Y}b_{il}(y)\frac{\partial}{\partial y_i}(\chi_k(y) + y_k)\frac{\partial}{\partial y_l}(\chi_j(y) + y_j)dy
\end{equation}
and as a perturbation of $B^{*}$, we write
\begin{equation}\label{c7hsh} b^{\#}_{jk} =\ b^{*}_{jk} + \int_{Y}b_{il}(y)\frac{\partial}{\partial y_i}(\chi_k(y)- \zeta_k(y))\frac{\partial}{\partial y_l}(\chi_j(y) - \zeta_k(y))dy .\end{equation}
This result \eqref{c7hsh} was obtained by Kesavan and Vanninathan in \cite{KV}, there they relied on the 
asymptotic expansion method for periodic structure. Also in \cite{KP} Kesavan and Saint Jean Paulin has proved this same result by
considering oscillatory test function method.
\paragraph{Case(2)\ $A$ and $B$ are both periodic with different periodicity to one another : }
Here we would like to mention another situation where $A$ and $B$ are need not to be same $Y$ periodic function. 
Let us say $A$ is $Y_1$ periodic and $B$ is $Y_2$ periodic matrix function, i.e.
 $A(y)=[a_{kl}(y)]_{1\leq k,l\leq N}\in \mathcal{M}(a_1,a_2;\ Y_1)$  be such that $a_{kl}(y)$ are $Y_1$-periodic functions
and  $B(y)=[b_{kl}(y)]_{1\leq k,l\leq N}\in \mathcal{M}(b_1,b_2;\ Y_2)$  be such that $b_{kl}(y)$ are $Y_2$-periodic functions
$ \forall k,l =1,2..,N.$ 
Now we set 
\begin{equation*}A^{\epsilon_1}(x)= [a_{kl}(\frac{x}{\epsilon_1})]\quad\mbox{and }\ B^{\epsilon_2}(x)= [b_{kl}(\frac{x}{\epsilon_2})]\end{equation*}
and extend it to the whole $\mathbb{R}^N$ by $\epsilon_1$ and $\epsilon_2$-periodicity with a small period of scales $\epsilon_1$ and $\epsilon_2$ respectively. 
The homogenized conductivity $A^{*}$ is defined by its entries
\begin{equation*}
a^{*}_{kl} = \frac{1}{|Y_1|}\int_{Y_1}a_{kl}(y)(\nabla_y\chi_k + e_k)\cdot(\nabla_y\chi_l + e_l) dy 
\end{equation*}
where for each unit vector $(e_{k})$ the cell test function $\chi_k\in H^1(Y_1)$ solves
\begin{equation*}
-div_y\ a_{kl}(y)(\nabla_y\chi_k(y)+e_k) = 0 \quad\mbox{in }Y_{1},\quad y \rightarrow\chi_k(y) \quad\mbox{is }Y_{1}\mbox{ periodic. }
\end{equation*}
And by following the convergence result \eqref{c7dc2} we write, 
\begin{equation}\label{c7te}\begin{aligned}
b^{\#}_{jk} &= limit\ [ b^{\epsilon_2}_{il}(x)\frac{\partial}{\partial x_i}(\chi_k^{\epsilon_1}(x) + x_k)\frac{\partial}{\partial x_l}(\chi_j^{\epsilon_1}(x) + x_j)]\\
              &=limit\ [ b_{il}(\frac{x}{\epsilon_2})\frac{\partial}{\partial x_i}(\chi_k(\frac{x}{\epsilon_1}) + x_k)\frac{\partial}{\partial x_l}(\chi_j(\frac{x}{\epsilon_1}) + x_j)].
\end{aligned}\end{equation}
\ \ \ (The above `limit' is to be understood in the distributional sense).
\paragraph*{1.}
Let us say $\epsilon_1 = t(\epsilon_1,\epsilon_2).\epsilon_2$, where $t(\epsilon_1,\epsilon_2)\rightarrow t$ for some $t\in [0,\infty)$ as $\epsilon_1,\epsilon_2$ tends to $0$. 
Then we write $\frac{x}{\epsilon_2} = \frac{x}{\epsilon_1}\cdot\frac{\epsilon_1}{\epsilon_2}=\frac{x}{\epsilon_1}t + o(1)$ (through the $Y_1$, $Y_2$ periodicity) 
then from \eqref{c7te} and following \cite[Page no. 57]{A} we get
\begin{equation}\label{c7tf}
b^{\#}_{jk} =  \frac{1}{|Y_1|}\int_{Y_1}b_{il}(ty)\frac{\partial}{\partial y_i}(\chi_k(y) + y_k)\frac{\partial}{\partial y_l}(\chi_j(y) + y_j)dy.
\end{equation}
\paragraph*{2.} Similarly, if $\epsilon_2 = s(\epsilon_1,\epsilon_2).\epsilon_1$ where $s(\epsilon_1,\epsilon_2)\rightarrow s$ for some $s \in [0,\infty)$ as $\epsilon_1,\epsilon_2$ tends to $0$, 
then we write $\frac{x}{\epsilon_1} = \frac{x}{\epsilon_2}\cdot\frac{\epsilon_2}{\epsilon_1}=\frac{x}{\epsilon_2}s + o(1)$ and we get
\begin{equation}\label{c7tg}
b^{\#}_{jk} =  \frac{1}{|Y_2|}\int_{Y_2}b_{il}(y)\frac{\partial}{\partial y_i}(\chi_k(sy) + y_k)\frac{\partial}{\partial y_l}(\chi_j(sy) + y_j)dy.
\end{equation}

\subsection{Variational characterizations of $B^{\#}$}
The homogenized matrix $A^{*}$ is defined in terms of the solutions of the
cell problems \eqref{c7hsr}. When $A(y)$ is symmetric, it is convenient to give 
another definition of $A^{*}$ involving standard variational principles. 
We consider the quadratic form $A^{*}\lambda\cdot\lambda$ where $\lambda$
is any constant vector in $\mathbb{R}^N$
\begin{equation*}
 A^{*}\lambda\cdot\lambda= \int_Y A(y)(\nabla_y\chi_\lambda + \lambda)\cdot(\nabla_y\chi_\lambda + \lambda) dy, 
\end{equation*}
where $\chi_\lambda$ is the solution of the following cell problem :
\begin{equation}\label{c7hsx}
-div_yA(y)(\nabla_y\chi_\lambda(y)+\lambda) = 0 \quad\mbox{in }Y,\quad y \rightarrow\chi_\lambda(y) \quad\mbox{is $Y$-periodic. }
\end{equation}
It is well-known that equation \eqref{c7hsx} is the Euler-Lagrange equation of the
following variational principle: Find $w(y)$ that minimizes
\begin{equation*} \int_Y A(y)(\nabla_y w(y) + \lambda)\cdot(\nabla_y w(y) + \lambda) dy\end{equation*} 
over all periodic functions $w$. Hence $A^{*}\lambda\cdot\lambda$ is given by the minimization 
of the potential energy
\begin{equation}\label{c7hsi}
 A^{*}\lambda\cdot\lambda=\ \underset{w(y)\in H^1_{\#}(Y)}{min}\int_Y A(y)(\nabla_y w(y) + \lambda)\cdot(\nabla_y w(y) + \lambda) dy. 
\end{equation}
Similarly, for the macro quantity $B^{*}$ it is given by 
\begin{equation}\label{c7hsj}
 B^{*}\lambda\cdot\lambda=\ \underset{w(y)\in H^1_{\#}(Y)}{min}\int_Y B(y)(\nabla_y w(y) + \lambda)\cdot(\nabla_y w(y) + \lambda) dy. 
\end{equation}
Next we give the variational characterization for the macro quantity $B^{\#}$. For any $\lambda\in \mathbb{R}^N$ 
we find $\psi_\lambda$ from below, while $\chi_\lambda$ is satisfying \eqref{c7hsx}
\begin{equation}\label{c7hsy}
-div_y(A(y)\nabla\psi_\lambda  - B(y)(\nabla\chi_\lambda + \lambda)) = 0 \quad\mbox{in }Y,\quad y \rightarrow\psi_\lambda(y) \quad\mbox{is $Y$-periodic. }
\end{equation}
Then following \eqref{c7th}, the quadratic form $B^{\#}\lambda\cdot\lambda$, associated with the macro quantity $B^{\#}$, is defined by
\begin{equation}\label{c7ti}
B^{\#}\lambda\cdot\lambda=\ \int_Y B(y)(\nabla_y \chi_\lambda(y) + \lambda)\cdot(\nabla_y \chi_\lambda(y) + \lambda) dy.  
\end{equation}
We consider two bilinear forms associated with the matrices $A(y)$ and $B(y)$ :
\begin{align*}
 &a : H^1_{\#}(Y)\times H^1_{\#}(Y) \rightarrow \mathbb{R} \ \mbox{ defined as } a(w_1,w_2)= \int_Y A(y)\nabla_y w_1(y)\cdot \nabla_y w_2(y) dy  \\
\mbox{ and }\ &b : H^1_{\#}(Y)\times H^1_{\#}(Y) \rightarrow \mathbb{R} \ \mbox{ defined as } b(w_1,w_2)= \int_Y B(y)\nabla_y w_1(y)\cdot \nabla_y w_2(y) dy.
\end{align*}
Then there exists a constant $a_1 >0$ (ellipticity constant of the matrix $A(y)$) such that 
\begin{equation}\label{c7dc4}
 \underset{w_1\in H^1_{\#}(Y)}{inf}\lb\underset{w_2\in H^1_{\#}(Y)}{sup}\ \frac{a(w_1,w_2)}{||\nabla w_1||_{L^2_{\#}}||\nabla w_2||_{L^2_{\#}}}\rb \geq\ a_1\ ; 
\end{equation}
 which is known as so called Babu\v{s}ka-Brezzi condition \cite[(i), Lemma 4.1]{GR}.  
And the bilinear form $b(.,.)$ is $H^1_{\#}(Y)/\mathbb{R}$-elliptic, i.e. there exist a $b_1 >0 $ (ellipticity constant of the matrix $B(y)$) such that 
\begin{equation}\label{c7dc5}
b(w,w) \geq \ b_1||w||^2_{H^1_{\#}(Y)/\mathbb{R}},\ \ \forall w\in H^1_{\#}(Y)/\mathbb{R}.
\end{equation}
Next we define the Lagrangian 
\begin{equation}
\mathcal{L}_\lambda :  H^1_{\#}(Y)\times H^1_{\#}(Y) \rightarrow \mathbb{R} \ \mbox{ defined as } \mathcal{L}_\lambda(w_1,w_2)= b(w_1+\lambda\cdot y,w_1+\lambda\cdot y) + a(w_1+\lambda\cdot y,w_2).  
\end{equation}
Then following \cite[Theorem 4.2]{GR} together with the conditions \eqref{c7dc4} and \eqref{c7dc5}, the solution  
$\chi_\lambda,\psi_\lambda$ of \eqref{c7hsx} and \eqref{c7hsy} is characterized by :
\begin{equation}
 \underset{w_1\in H^1_{\#}(Y)}{Min}\lb \underset{w_2\in H^1_{\#}(Y)}{Max} \mathcal{L}_\lambda(w_1,w_2)\rb = \mathcal{L}_\lambda(\chi_\lambda,\psi_\lambda) = \underset{w_2\in H^1_{\#}(Y)}{Max}\lb \underset{w_1\in H^1_{\#}(Y)}{Min} \mathcal{L}_\lambda(w_1,w_2)\rb. 
\end{equation}
Now as we see, by multiplying \eqref{c7hsx} by $\psi_\lambda$,
\begin{align*}
\mathcal{L}_\lambda(\chi_\lambda,\psi_\lambda) &=\ \int_Y B(y)(\nabla_y\chi_\lambda + \lambda)\cdot (\nabla_y\chi_\lambda + \lambda)dy + \int_Y A(y)(\nabla_y\chi_\lambda +\lambda)\cdot\nabla_y\psi_\lambda dy \\
                                       &=\ \int_Y B(y)(\nabla_y\chi_\lambda + \lambda)\cdot (\nabla_y\chi_\lambda + \lambda)dy. 
\end{align*}                                       
So by \eqref{c7ti} we obtain
\begin{equation*}\mathcal{L}_\lambda(\chi_\lambda,\psi_\lambda)=\ B^{\#}\lambda\cdot\lambda.\end{equation*}
Thus the variational characterization of $B^{\#}\lambda\cdot\lambda$ would be
\begin{equation}\begin{aligned}
&B^{\#}\lambda\cdot\lambda\\
&= \underset{w_1\in H^1_{\#}(Y)}{Min}\lb \underset{w_2\in H^1_{\#}(Y)}{Max}\int_Y B(y)(\nabla_y w_1 + \lambda)\cdot (\nabla_y w_1 + \lambda)dy + \int_Y A(y)(\nabla_y w_1 +\lambda)\cdot\nabla_y w_2 dy \rb
\end{aligned}\end{equation}
or, 
\begin{equation}\begin{aligned}
&B^{\#}\lambda\cdot\lambda\\
&= \underset{w_2\in H^1_{\#}(Y)}{Max}\lb \underset{w_1\in H^1_{\#}(Y)}{Min}\int_Y B(y)(\nabla_y w_1 + \lambda)\cdot (\nabla_y w_1 + \lambda)dy + \int_Y A(y)(\nabla_y w_1 +\lambda)\cdot\nabla_y w_2 dy\rb.
\end{aligned}\end{equation}
\section{Bloch wave spectral analysis}\label{c7en4}
\setcounter{equation}{0}
Here we will present a brief survey on Bloch waves and corresponding 
Bloch eigen elements in periodic structures. %and later in the class of Hashin-Shtrikman structures.
Based on that, we will give the spectral representation of the macro coefficients and 
will make the passage into passing to the limit. 
\subsection{Introduction to Bloch waves, Bloch eigenvalues and eigenvectors in periodic structures}\label{c7Sd9}
Introduction of Bloch waves in periodic structures is based on Floquet
principle: periodic structures can be regarded as multiplicative
perturbations of homogeneous media by periodic functions. This principle
give rise to a new class of functions, namely $(\eta, Y_1)$ periodic functions.
\begin{align*}
\psi(.;\eta) \mbox{ is }(\eta;Y_1)-\mbox{periodic, if }
\psi(y+2\pi m;\eta) = e^{2\pi i m.\eta}\psi(y;\eta) \quad \forall m \in \mathbb{Z}^N , y \in \mathbb{R}^N.
\end{align*}
Accordingly, the following spaces are used: $L^2_{\#}(\eta,Y_1),
L^2_{\#}(Y_1),H^1_{\#}(\eta,Y_1),H^1_{\#}(Y_1)$ etc. 
\begin{align*}
  L^2_{\#}(\eta;Y_1) &= \{ v\in L^2_{loc}(\mathbb{R}^N)\ |\   v \mbox{ is }(\eta,Y_1)\ - \mbox{ periodic}\},\\
  H^1_{\#}(\eta; Y_1) &= \{ v\in H^1_{loc}(\mathbb{R}^N)\ |\ v \mbox{ is }(\eta,Y_1)\ - \mbox{ periodic}\}.
\end{align*}
The $(\eta,Y_1)$ periodicity condition remains unaltered if we
replace $\eta$ by $(\eta + q)$ with $q \in\mathbb{Z}^N$ , so $\eta$ 
can therefore be confined to the dual cell $\eta \in Y_1^{\prime} = (2\pi)[-\frac{1}{2},\frac{1}{2}]^N$
or equivalently dual torus. $\eta=0$ gives back the usual periodicity condition.\\
In fact, these classes are state spaces for Bloch waves.
The link between `Bloch waves' and the traditional `Homogenization theory' is as
follows: the homogenized tensor and the cell test functions can be
obtained as infinitesimal approximation from (ground state) Bloch waves at $\eta =0$
and its energy.\\
\\
We consider the operator 
\begin{equation*} \mathcal{A} \equiv -\frac{\partial}{\partial y_k}\lb a_{kl}(y)\frac{\partial}{\partial y_l} \rb \quad k,l=1,2..,N\end{equation*}
where the coefficient matrix  $A(y) = [a_{kl}(y)]$ defined on $Y_1$ a.e. where $Y_1 =[0,1]^N$ is known as the periodic cell
and $A\in\mathcal{M}(a_1,a_2;\ Y_1)$ for some $0< a_1 \leq a_2 < \infty$, i.e. 
\begin{equation*}a_{kl}=a_{lk}\hspace{5pt}\forall k,l \mbox{ and }\ (A(y)\xi,\xi) \geq \alpha|\xi|^2,\ \ |A(y)\xi|\leq \beta|\xi|\mbox{ for any } \xi \in \mathbb{R}^N,\mbox{ a.e. on }Y_1.\end{equation*}
We define Bloch waves $\psi$ associated with the operator 
$\mathcal{A}$ as follows. Let us consider the following spectral problem 
parameterized by $\eta \in \mathbb{R}^N$ : \\
\\
Find $\lambda = \lambda(\eta) \in \mathbb{R}$ 
and $\psi_A = \psi_A(y; \eta)$ (not identically zero) such that
\begin{equation}\label{c7b1}
\mathcal{A}\psi_A(.;\eta) =\lambda(\eta)\psi_A(.;\eta) \quad\mbox{in }\mathbb{R}^N,\ \ \ \psi_A(.;\eta) \mbox{ is }(\eta;Y_1)-\mbox{periodic.}
\end{equation}
By applying the Floquet principle we define $\varphi_A(y; \eta) = e^{-iy·\eta} \psi_A(y; \eta)$,  and \eqref{c7b1} can be rewritten in terms of 
$\varphi_A$ as follows:
\begin{equation}\label{c7ps}
\mathcal{A}(\eta)\varphi_A = \lambda(\eta) \varphi_A \mbox{ in }\mathbb{R}^N, \quad \varphi_A \mbox{ is $Y_1$-periodic}
\end{equation}
where, the operator $\mathcal{A}(\eta)$  is defined by  
\begin{equation*}
\mathcal{A}(\eta) = -(\frac{\partial}{\partial y_k} + i\eta_k)[a_{kl}(y)(\frac{\partial}{\partial y_l} + i\eta_l)].
\end{equation*}
It is well known from \cite{CPV} that for each $\eta\in Y_1^{\prime}$ ,
the above spectral problem admits a discrete sequence of eigenvalues with the following properties:
\begin{equation*}
0 \leq \lambda_1 (\eta) \leq · · · \leq \lambda_m (\eta) \leq · · · \rightarrow \infty, \mbox{ and } \forall\ m \geq 1,\ \lambda_m(\eta)\mbox{ is a Lipschitz function of }\eta \in Y_1^{\prime} .
\end{equation*}
The corresponding eigenfunctions denoted by $\psi_{m,A}(.; \eta)$ and $\varphi_{m,A}(.; \eta)$ 
form orthonormal bases in the spaces of all $L^2_{loc} (\mathbb{R}^N )$
functions which are $(\eta; Y_1 )$-periodic and $Y_1$ -periodic, respectively; 
In fact these eigenfunctions belong to the spaces $H^1_{\#}(\eta; Y_1)$ and $H^1_{\#}(Y_1)$ respectively.\\
\\
Similarly, we consider the translated operator $\mathcal{B}(\eta)= -(\frac{\partial}{\partial y_k}+i\eta_k)(b_{kl}(y)(\frac{\partial}{\partial y_l}+i\eta_l)) $ 
associated with $B=[b_{kl}(y)]$ a symmetric $Y_2$ periodic matrix in $\mathcal{M}(b_1,b_2;\ Y_2)$
and we define the Bloch eigenvalues $\mu_m(\eta)$ and Bloch eigenvectors $\varphi_m(.;\eta)$ as
\begin{equation}\label{c7Sd5}
\mathcal{B}(\eta)\varphi_{m,B}(.;\eta) =\mu_m(\eta)\varphi_{m,B}(.;\eta) \mbox{ in }\mathbb{R}^N,\quad \varphi_{m,B}(.;\eta) \mbox{ is }Y_2-\mbox{periodic.}
\end{equation}
\\
To obtain the spectral resolution of $\mathcal{A}^{\epsilon_1}=-\frac{\partial}{\partial x_k}(a_{kl}(\frac{x}{\epsilon_1})\frac{\partial}{\partial x_l})$ in an analogous manner, 
let us introduce Bloch waves at the $\epsilon_1$-scale:
\begin{equation*}\lambda^{\epsilon_1}_m(\xi) = \epsilon_1^{-2}\lambda_m(\eta),\quad \varphi^{\epsilon_1}_{m,A}(x;\xi) = \varphi_{m,A} (y;\eta),
\ \ \psi^{\epsilon_1}_{m,A}(x;\xi) = \psi_{m,A} (y; \eta)\end{equation*}
where the variables $(x, \xi)$ and $(y;\eta)$ are related by $y = \frac{x}{\epsilon_1}$ and $\eta =\epsilon_1\xi$.
Observe that $\varphi^{\epsilon_1}_{m,A}(x;\xi)$ is $\epsilon_1 Y_1$ -periodic (in $x$) and $\epsilon_1^{-1}Y_1^{\prime}$ -periodic 
with respect to $\xi$. In the same manner, $\psi^{\epsilon_1}_m(.; \xi)$ is $(\epsilon_1\xi; \epsilon Y_1 )$-periodic. 
The dual cell at $\epsilon$-scale is $\epsilon^{-1}Y_1^{\prime}$ where $\xi$ varies.\\
The functions $\psi_{m,A}^{\epsilon_1}$ and $\varphi_{m,A}^{\epsilon_1}$ 
(referred to as Bloch waves) enable us to describe the spectral resolution of 
$\mathcal{A}^{\epsilon_1}$ ( an unbounded self-adjoint operator
in $L^2 (\mathbb{R}^N)$ ) in the orthogonal basis 
$\{e^{ix\cdot\xi} \varphi_{m,A}^{\epsilon}(x; \xi)\ |\ m \geq 1, \xi \in \epsilon_1^{-1}Y_1^{\prime} \}$. 
More precisely, we have the following.
\begin{proposition}[Bloch decomposition \cite{CV}]\label{c7deA}
Let $g \in L^2(\mathbb{R}^N)$. The $m$-th Bloch coefficient of $g$ at the $\epsilon_1$-scale is
defined as follows:
\begin{equation}\label{c7bt}
(B^{\epsilon_1}_{m,A}\ g)(\xi) = \int_{\mathbb{R}^N} g(x)e^{-ix\cdot\xi} \overline{\varphi^{\epsilon_1}}_{m,A}(x;\xi)dx \quad \forall m \geq 1,\ \xi \in \epsilon_1^{-1}Y_1^{\prime}.
\end{equation}
Then the following inverse formula holds:
\begin{equation}\label{c7ibt}
g(x) = \int_{\epsilon_1^{-1}Y_1^{\prime}}(B^{\epsilon_1}_{m,A}\ g)(\xi)e^{ix\cdot\xi} \varphi^{\epsilon_1}_{m,A}(x; \xi)d\xi.
\end{equation}
And the Parseval’s identity:
\begin{equation*}\int_{\mathbb{R}^N}|g(x)|^2 dx = \int_{\epsilon_1^{-1}Y_1^{\prime}}\sum_{m=1}^{\infty}|(B^{\epsilon_1}_{m,A}\ g)(\xi)|^{2} d\xi.\end{equation*}
Finally, for all $g$ in the domain of $\mathcal{A}^{\epsilon_1}$, we have
\begin{equation*}\mathcal{A}^{\epsilon_1}g(x) = \int_{\epsilon_1^{-1}Y_1^{\prime}}\sum_{m=1}^{\infty}\lambda^{\epsilon_1}_m(\xi)(B^{\epsilon_1}_{m,A}\ g)(\xi)e^{ix\cdot\xi} \varphi^{\epsilon_1}_{m,A}(x;\xi)d\xi.\end{equation*}
\end{proposition}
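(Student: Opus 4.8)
The plan is to reduce everything to the unit scale $\epsilon_1 = 1$, combine the fibrewise spectral decomposition of the shifted operators $\mathcal{A}(\eta)$ with the unitarity of the Gelfand--Bloch transform on $L^2(\mathbb{R}^N)$, and then transport the resulting identities back to the $\epsilon_1$-scale. The reduction is immediate: the dilation $(U_{\epsilon_1}g)(x) = \epsilon_1^{N/2} g(\epsilon_1 x)$ is unitary on $L^2(\mathbb{R}^N)$ and conjugates $\mathcal{A}^{\epsilon_1}$ into $\epsilon_1^{-2}\mathcal{A}$, while under the change of variables $y = x/\epsilon_1$, $\eta = \epsilon_1\xi$ the scaled data $\lambda_m^{\epsilon_1}(\xi)$, $\varphi_{m,A}^{\epsilon_1}(x;\xi)$, $\psi_{m,A}^{\epsilon_1}(x;\xi)$ are, by definition, the unit-scale data $\lambda_m(\eta)$, $\varphi_{m,A}(y;\eta)$, $\psi_{m,A}(y;\eta)$. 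So it suffices to prove the four identities for $\mathcal{A}$ (i.e. $\epsilon_1 = 1$) and then apply $U_{\epsilon_1}^{-1}$, tracking the Jacobians; in particular \eqref{c7bt} and \eqref{c7ibt} at scale $\epsilon_1$ follow from their $\epsilon_1 = 1$ versions.

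For fixed $\eta \in Y_1'$ I would use that $\{\psi_{m,A}(\cdot;\eta)\}_{m\ge 1}$ (resp. $\{\varphi_{m,A}(\cdot;\eta)\}_{m\ge 1}$) is an orthonormal basis of $L^2_\#(\eta;Y_1)$ (resp. $L^2_\#(Y_1)$), which is exactly the spectral statement recalled before the proposition and rests on the sesquilinear form of $\mathcal{A}(\eta)$ being bounded and, after a shift by a large constant, coercive on $H^1_\#(Y_1)$ (coercivity from $(A\xi,\xi)\ge a_1|\xi|^2$ plus Poincar\'e--Wirtinger), together with the compactness of $H^1_\#(Y_1)\hookrightarrow L^2_\#(Y_1)$, so that $\mathcal{A}(\eta)$ has compact self-adjoint resolvent. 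I would also record the measurability of $\eta \mapsto \psi_{m,A}(\cdot;\eta)$ required for the direct-integral formalism; it follows from the Lipschitz dependence of $\lambda_m(\eta)$ and a measurable selection of the associated spectral projections (continuity of individual eigenvectors in $\eta$ is not needed).

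The heart of the matter is the Gelfand--Bloch transform. For $g \in C_c^\infty(\mathbb{R}^N)$ set
\[
(\mathcal{G}g)(y;\eta) = \sum_{n\in\mathbb{Z}^N} g(y + 2\pi n)\, e^{-2\pi i n\cdot\eta},
\]
a locally finite sum; one checks that $\mathcal{G}g(\cdot;\eta) \in L^2_\#(\eta;Y_1)$ and that $\mathcal{G}$ intertwines $\mathcal{A}$ with its fibrewise action on $(\eta;Y_1)$-periodic functions. The claim I would prove is that $\mathcal{G}$ extends to a unitary isomorphism of $L^2(\mathbb{R}^N)$ onto the direct integral $\int^{\oplus}_{Y_1'} L^2_\#(\eta;Y_1)\, d\eta$, with inverse $g(x) = \int_{Y_1'} (\mathcal{G}g)(x;\eta)\, d\eta$, where the integrand is the $(\eta;Y_1)$-periodic function evaluated at $x \in \mathbb{R}^N$. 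This is the classical computation: expand $\mathcal{G}g(\cdot;\eta)$ in the exponential basis of $L^2(Y_1)$, recognise the coefficients as the values of $\widehat g$ on the dual lattice translated by $\eta$, and apply Plancherel on $\mathbb{R}^N$; carried out on the dense class $C_c^\infty(\mathbb{R}^N)$ this yields the Parseval identity $\int_{\mathbb{R}^N}|g|^2 = \int_{Y_1'}\|\mathcal{G}g(\cdot;\eta)\|^2_{L^2_\#(\eta;Y_1)}\, d\eta$ (up to the normalisation fixed in the statement), which extends $\mathcal{G}$ to an isometry on all of $L^2(\mathbb{R}^N)$, with surjectivity supplied by the inversion formula.

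It remains to assemble. Expanding the fibre $\mathcal{G}g(\cdot;\eta)$ in the orthonormal basis $\{\psi_{m,A}(\cdot;\eta)\}_m$ gives coefficients $c_m(\eta) = \langle \mathcal{G}g(\cdot;\eta), \psi_{m,A}(\cdot;\eta)\rangle_{L^2_\#(\eta;Y_1)}$; unfolding $\mathbb{R}^N = \bigcup_n (Y_1 + 2\pi n)$ and using the $Y_1$-periodicity of $\varphi_{m,A}$ together with $\psi_{m,A} = e^{i y\cdot\eta}\varphi_{m,A}$ identifies $c_m(\eta)$ with the Bloch coefficient $(B_{m,A}\, g)(\eta)$ of \eqref{c7bt} at $\epsilon_1 = 1$. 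Substituting this expansion into the inversion and Parseval identities for $\mathcal{G}$ yields \eqref{c7ibt} and the Parseval identity as stated, while from $\mathcal{A}\psi_{m,A}(\cdot;\eta) = \lambda_m(\eta)\psi_{m,A}(\cdot;\eta)$ and the intertwining property it follows that $\mathcal{A}$ acts as multiplication by $\lambda_m(\eta)$ on the $m$-th Bloch component, which is the spectral formula for $\mathcal{A}$ on its domain; undoing the scaling reduction gives the four identities for $\mathcal{A}^{\epsilon_1}$. The main obstacle is the Gelfand-transform step --- keeping the choices of fundamental domains $Y_1$ and $Y_1'$ and the $2\pi$-normalisations coherent throughout, and justifying the simultaneous passage from $C_c^\infty(\mathbb{R}^N)$ to $L^2(\mathbb{R}^N)$ for both the transform and its inverse.
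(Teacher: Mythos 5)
Your proposal is correct and is essentially the classical Gelfand--Bloch direct-integral argument (rescaling to the unit cell, fibrewise spectral decomposition of $\mathcal{A}(\eta)$ with compact resolvent, unitarity of the Bloch transform, then reassembly), which is precisely the proof of this proposition in the cited reference \cite{CV}; the paper itself offers no proof beyond that citation. Note only that the inversion formula \eqref{c7ibt} as printed is missing the sum over $m\geq 1$, which your argument automatically supplies.
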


Using the above proposition, the classical homogenization result was deduced in \cite{CV}. 
We recall the main steps. The first one consists of considering a sequence
$u^{\epsilon_1} \in H^1(\mathbb{R}^N)$ satisfying  $\mathcal{A}^{\epsilon_1}u^{\epsilon} = f $ in $\mathbb{R}^N$ 
with the fact $u^{\epsilon_1}\rightharpoonup u$ in $H^{1}(\mathbb{R}^N)$ 
weak and $u^{\epsilon_1}\rightarrow u $ in $L^{2}(\mathbb{R}^N)$ strong.
We can express the equation in the equivalent form
\begin{equation*} \lambda^{\epsilon_1}_m(\xi)(B^{\epsilon_1}_{m,A}\ u^{\epsilon_1})(\xi) = (B^{\epsilon_1}_{m,A}\ f )(\xi) \quad \forall m \geq 1, \xi \in \epsilon^{-1}Y_1^{\prime}\end{equation*}
In the homogenization process, one can neglect all the relations for $m \geq 2$. More
precisely, it is proved in \cite{CV} that the following result holds.
\begin{proposition}\cite{CV}\label{c7hm2}
\begin{equation*}
|| \int_{\epsilon_1^{-1}Y_1^{\prime}}\sum_{m=2}^{\infty} (B^{\epsilon_1}_{m,A}\ u^{\epsilon_1}(\xi))\ e^{ix\cdot\xi} \varphi^{\epsilon_1}_{m,A}(x;\xi)d\xi ||_{L^2(\mathbb{R}^N)}\leq c\epsilon_1. 
\end{equation*}
\hfill\qed
\end{proposition}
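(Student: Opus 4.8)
The plan is to exploit the fact that, in the homogenization limit, all the information is carried by the first Bloch mode, because the higher modes $m\ge 2$ are uniformly separated from the bottom of the spectrum by a spectral gap. Write $v^{\epsilon_1}$ for the function inside the norm — the projection of $u^{\epsilon_1}$ onto the modes $m\ge 2$. I would actually aim at the (sharper) bound $\|v^{\epsilon_1}\|_{L^2(\mathbb{R}^N)}\le C\epsilon_1^2$, which in particular gives the stated $c\epsilon_1$. The three ingredients are: the Parseval identity of Proposition~\ref{c7deA}, a uniform-in-$\eta$ spectral gap $\lambda_2(\eta)\ge c_0>0$ on the dual cell, and the diagonalised form of the equation $\mathcal{A}^{\epsilon_1}u^{\epsilon_1}=f$.

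First I would apply Parseval: $\|v^{\epsilon_1}\|_{L^2}^2=\int_{\epsilon_1^{-1}Y_1^{\prime}}\sum_{m\ge 2}|(B^{\epsilon_1}_{m,A}u^{\epsilon_1})(\xi)|^2\,d\xi$, so the problem reduces to controlling the tail Bloch coefficients of $u^{\epsilon_1}$. Applying $B^{\epsilon_1}_{m,A}$ to $\mathcal{A}^{\epsilon_1}u^{\epsilon_1}=f$ and using Proposition~\ref{c7deA} diagonalises the equation: $\lambda^{\epsilon_1}_m(\xi)(B^{\epsilon_1}_{m,A}u^{\epsilon_1})(\xi)=(B^{\epsilon_1}_{m,A}f)(\xi)$. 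Granting the gap $\lambda^{\epsilon_1}_m(\xi)=\epsilon_1^{-2}\lambda_m(\epsilon_1\xi)\ge c_0\epsilon_1^{-2}$ for $m\ge 2$, I would write $|(B^{\epsilon_1}_{m,A}u^{\epsilon_1})(\xi)|^2\le (\epsilon_1^2/c_0)\,\lambda^{\epsilon_1}_m(\xi)\,|(B^{\epsilon_1}_{m,A}u^{\epsilon_1})(\xi)|^2$, sum over $m\ge 2$ and integrate in $\xi$, and recognise $\int_{\epsilon_1^{-1}Y_1^{\prime}}\sum_{m\ge 1}\lambda^{\epsilon_1}_m(\xi)|(B^{\epsilon_1}_{m,A}u^{\epsilon_1})(\xi)|^2\,d\xi=\langle\mathcal{A}^{\epsilon_1}u^{\epsilon_1},u^{\epsilon_1}\rangle=\int_{\mathbb{R}^N}A^{\epsilon_1}\nabla u^{\epsilon_1}\cdot\nabla u^{\epsilon_1}\le a_2\|\nabla u^{\epsilon_1}\|_{L^2}^2$, which is bounded uniformly in $\epsilon_1$ by the a priori energy estimate. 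This yields $\|v^{\epsilon_1}\|_{L^2}^2\le C\epsilon_1^2$. (Alternatively, if $f\in L^2(\mathbb{R}^N)$, one divides directly by $\lambda^{\epsilon_1}_m(\xi)$ and applies Parseval to $f$, with the same conclusion.)

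The real content is the uniform spectral gap $c_0:=\inf_{\eta\in Y_1^{\prime}}\lambda_2(\eta)>0$. Since $\eta\mapsto\lambda_2(\eta)$ is continuous — Lipschitz, by \cite{CPV} — and $Y_1^{\prime}$ is compact, the infimum is attained at some $\eta_0$, and it remains only to exclude $\lambda_2(\eta_0)=0$. If $\eta_0\ne 0$ I would show even $\lambda_1(\eta_0)>0$: a null eigenfunction $\varphi$ of $\mathcal{A}(\eta_0)$ makes $\psi:=e^{iy\cdot\eta_0}\varphi$ satisfy $\mathcal{A}\psi=0$ and be $(\eta_0,Y_1)$-periodic, so testing against $\overline{\psi}$ over one period cell (boundary terms cancel by the twisted periodicity) forces $\int A(y)\nabla\psi\cdot\overline{\nabla\psi}=0$, hence $\psi\equiv\mathrm{const}$ by ellipticity; but a non-zero constant is not $(\eta_0,Y_1)$-periodic unless $\eta_0=0$, so $\varphi\equiv 0$. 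If $\eta_0=0$, then $\lambda_1(0)=0$ with the constants as eigenspace, and this ground eigenvalue of $-\mathrm{div}(A\nabla)$ with periodic boundary conditions is simple, so $\lambda_2(0)>0$. Either way $c_0>0$, and using $\lambda_m(\eta)\ge\lambda_2(\eta)$ for all $m\ge 2$ gives the gap used above.

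The step I expect to be the main obstacle — or at any rate the one carrying all the weight — is this uniform positivity of $\lambda_2$; everything else is Parseval bookkeeping together with the standard energy bound for $u^{\epsilon_1}$. Its two non-trivial inputs are the continuity of the Bloch eigenvalues on $Y_1^{\prime}$ (quoted from \cite{CPV}) and the simplicity of the ground state at $\eta=0$, which is precisely what keeps the gap from closing there.
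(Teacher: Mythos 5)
Your argument is correct, and it is essentially the argument the paper delegates to the literature: the paper does not prove this proposition itself but quotes it from \cite{CV}, and your reconstruction --- Parseval's identity from Proposition \ref{c7deA}, the diagonalised relation $\lambda^{\epsilon_1}_m(\xi)(B^{\epsilon_1}_{m,A}u^{\epsilon_1})(\xi)=(B^{\epsilon_1}_{m,A}f)(\xi)$, and a uniform spectral gap $\lambda_m(\eta)\ge c_0>0$ for $m\ge 2$, $\eta\in Y_1^{\prime}$ --- is exactly the mechanism used there. The one place where you genuinely diverge is the proof of the gap: in \cite{CV} it is obtained by a min--max comparison of $\mathcal{A}(\eta)$ with a fixed reference cell problem, whereas you use compactness of $Y_1^{\prime}$, the Lipschitz continuity of $\lambda_2$ quoted from \cite{CPV}, simplicity of the ground state at $\eta=0$, and the elementary exclusion of a zero mode at $\eta_0\neq 0$ (the twisted-periodicity boundary terms do cancel, and the only character $e^{-i\eta_0\cdot y}$ that is $Y_1$-periodic with $\eta_0$ in the dual cell is $\eta_0=0$, so your argument is sound); this buys a self-contained proof at the cost of invoking continuity of the second eigenvalue rather than a variational comparison. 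One small internal slip: the energy-based chain you actually carry out gives $\|v^{\epsilon_1}\|_{L^2}^2\le C\epsilon_1^2$, i.e.\ the rate $\epsilon_1$, not the ``sharper'' $\epsilon_1^2$ announced at the outset; the $\epsilon_1^2$ rate requires the parenthetical route of dividing by $\lambda^{\epsilon_1}_m(\xi)\ge c_0\epsilon_1^{-2}$ and applying Parseval to $f\in L^2(\mathbb{R}^N)$. Since the proposition claims only the bound $c\epsilon_1$, this does not affect correctness.
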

\noindent
Thus we can concentrate our attention only on the relation corresponding to the first Bloch wave:
\begin{equation}\label{c7fe} \lambda_1^{\epsilon_1}(\xi)(B_{1,A}^{\epsilon_1}\ u^{\epsilon_1})(\xi) =\ B^{\epsilon_1}_{1,A}\ f(\xi) \quad\forall \xi\in \epsilon_1^{-1}Y_1^{\prime}.\end{equation}
\noindent
Similarly, by considering the sequence
$p^{\epsilon_1} \in H^1(\mathbb{R}^N)$ satisfying the adjoint state 
equation $\mathcal{A}^{\epsilon_1}p^{\epsilon_1} = \mathcal{B}^{\epsilon_2}u^{\epsilon_1}$ in $\mathbb{R}^N$
with the fact $p^{\epsilon_1}\rightharpoonup p$ in $H^{1}(\mathbb{R}^N)$ weak and  
$p^{\epsilon_1}\rightarrow p$ in $L^{2}(\mathbb{R}^N)$strong.
Then applying the first Bloch transformation \eqref{c7bt} on the adjoint state equation we have,
\begin{equation}\label{c7afe}
\lambda_1^{\epsilon_1}(\xi)(B_{1,A}^{\epsilon_1}\ p^{\epsilon_1})(\xi) =\ (B^{\epsilon_1}_{1,A}\ ( \mathcal{B}^{\epsilon_2}\ u^{\epsilon_1} ))(\xi) \quad\forall \xi\in \epsilon_1^{-1}Y_1^{\prime}.
\end{equation}
By passing to the limit as $\epsilon_1\rightarrow 0$ in \eqref{c7fe}, 
we get the homogenized equation in the Fourier space
\begin{equation}\label{c7forA}
a^{*}_{kl}\ \xi_k\xi_l\ \widehat{u}(\xi) = \widehat{f}(\xi) \quad\forall\xi \in \mathbb{R}^N.
\end{equation}
On the other hand, passing to the limit as $\epsilon_1,\epsilon_2 \rightarrow 0$ in \eqref{c7afe}, we would like to get the adjoint state homogenized equation in the Fourier space
\begin{equation*}
a^{*}_{kl}\ \xi_k\xi_l\ \widehat{p}(\xi)= b^{\#}_{kl}\ \xi_k\xi_l\ \widehat{u}(\xi) \quad\forall\xi \in \mathbb{R}^N
\end{equation*}
which we present as a new result in the next section (cf. \eqref{c7pst}).\\ 
\\
However, the result for \eqref{c7forA} follows from the following regularity result of the Bloch eigen value  $\lambda_1(\eta)$ 
and the Bloch eigen vector $\varphi_{1,A}(\eta)$ of the operator $\mathcal{A}(\eta)$.
\begin{proposition}[Regularity of the Ground state \cite{CV,COV}]\label{c7based}
Under the periodic assumption on the matrix $A\in \mathcal{M}(a_1,a_2; Y_1)$, then there exists a $\delta > 0$
such that the first eigenvalue $\lambda_1(\eta)$ is an analytic function on 
$B_{\delta}(0) = \{\eta\in\mathbb{R}^N \ |\ |\eta| < \delta\}$, and
there is a choice of the first eigenvector $\varphi_1(y;\eta)$ satisfying
\begin{equation*}
\eta \mapsto \varphi_{1,A}(.;\eta) \in H^1_{\#}(Y_1)\mbox{ is analytic on } B_{\delta}\quad\mbox{ and }\ \varphi_{1,A}(y;0)= |Y_1|^{-1/2}
\end{equation*}
with the usual normalization condition and the choice of the phase factor to determine the eigenvector uniquely,
\begin{equation*}
||\varphi_{1,A}(.;\eta)||_{L^2(Y_1)}=1, \mbox{ and } \quad \Im \int_{Y_1} \varphi_{1,A}(y;\eta) dy = 0,\quad \eta\in B_{\delta}.
\end{equation*}
Moreover, we have the following relations,
\begin{equation*}\begin{aligned}
& \lambda_1(0) = 0,\quad D_k \lambda_1(0) = \frac{\partial\lambda_1}{\partial \eta_k}(0) = 0 \quad\forall k = 1,2,..,N.\\
& \varphi_{1,A}(.;0)= |Y_1|^{-1/2},\quad D_k\varphi_{1,A}(.;0) = i|Y_1|^{-1/2}\chi_k(y)\\ 
&\frac{1}{2}D^2_{kl}\lambda_1(0) = \frac{1}{2}\frac{\partial^2\lambda_1}{\partial \eta_k\partial\eta_l}(0) = a^{*}_{kl}. \quad\forall k,l =1,2,..,N
\end{aligned}\end{equation*}
- the last expression is considered as a Bloch spectral representation of the homogenized matrix $A^{*}$. 
\hfill\qed\\
\end{proposition}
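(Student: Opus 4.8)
The plan is to reduce the statement to Kato's analytic perturbation theory for the holomorphic operator family $\eta\mapsto\mathcal{A}(\eta)$ and then to extract the derivative identities at $\eta=0$ by differentiating the Bloch equation \eqref{c7ps} and integrating over $Y_1$.

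First I would write $\mathcal{A}(\eta)=\mathcal{A}+\mathcal{A}^{(1)}(\eta)+\mathcal{A}^{(2)}(\eta)$, with $\mathcal{A}=-\partial_k(a_{kl}\partial_l)$, the first-order part $\mathcal{A}^{(1)}(\eta)v=-i\eta_l\partial_k(a_{kl}v)-i\eta_k a_{kl}\partial_l v$, and the zeroth-order quadratic part $\mathcal{A}^{(2)}(\eta)v=\eta_k\eta_l a_{kl}v$. The sesquilinear form on $H^1_{\#}(Y_1)$ attached to $\mathcal{A}(\eta)$ depends polynomially, hence holomorphically, on $\eta\in\mathbb{C}^N$, and it is sectorial and uniformly bounded below for $\eta$ near the origin, so $\{\mathcal{A}(\eta)\}$ is a holomorphic family of type (B). At $\eta=0$ the operator $\mathcal{A}$ is nonnegative with compact resolvent on $L^2_{\#}(Y_1)$, and $\ker\mathcal{A}$ consists exactly of the constants (if $\mathcal{A}w=0$ then $\int_{Y_1}A\nabla w\cdot\nabla w=0$, so $\nabla w=0$ by ellipticity); thus $\lambda_1(0)=0$ is a simple, isolated eigenvalue. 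By upper semicontinuity of the spectrum there is $\delta>0$ so that $\lambda_1(\eta)$ remains simple and isolated for $|\eta|<\delta$, and Kato's theorem furnishes a holomorphic eigenprojection $P_1(\eta)$ and a holomorphic eigenvalue $\lambda_1(\eta)$ (real for real $\eta$, by self-adjointness). An analytic eigenvector is then $\varphi_{1,A}(\cdot;\eta)=P_1(\eta)\mathbf 1/\|P_1(\eta)\mathbf 1\|_{L^2(Y_1)}$; the residual phase $e^{i\theta(\eta)}$ is removed analytically---after possibly shrinking $\delta$---by imposing $\Im\int_{Y_1}\varphi_{1,A}(y;\eta)\,dy=0$, which is solvable because $\int_{Y_1}\varphi_{1,A}(y;0)\,dy\neq0$, and the sign is fixed by $\varphi_{1,A}(\cdot;0)=|Y_1|^{-1/2}$, the normalized constant in $\ker\mathcal{A}$.

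For the derivative relations I would differentiate $\mathcal{A}(\eta)\varphi_{1,A}(\cdot;\eta)=\lambda_1(\eta)\varphi_{1,A}(\cdot;\eta)$ in $\eta_k$ and set $\eta=0$, using $\lambda_1(0)=0$, to obtain $(D_k\mathcal{A}(0))\varphi_{1,A}(\cdot;0)+\mathcal{A}\,D_k\varphi_{1,A}(\cdot;0)=(D_k\lambda_1(0))\varphi_{1,A}(\cdot;0)$. Integrating over $Y_1$ kills both $\int_{Y_1}\mathcal{A}(\cdot)$ and $\int_{Y_1}(D_k\mathcal{A}(0))\varphi_{1,A}(\cdot;0)\,dy=-i|Y_1|^{-1/2}\int_{Y_1}\partial_p a_{pk}\,dy$ as integrals of divergences over the torus, whence $D_k\lambda_1(0)=0$. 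The surviving identity $\mathcal{A}\,D_k\varphi_{1,A}(\cdot;0)=i|Y_1|^{-1/2}\partial_p a_{pk}$ is, up to the factor $i|Y_1|^{-1/2}$, exactly the weak form of the cell problem \eqref{c7hsr} for $\chi_k$; since differentiating the normalizations $\|\varphi_{1,A}\|_{L^2(Y_1)}=1$ and $\Im\int_{Y_1}\varphi_{1,A}=0$ forces $\int_{Y_1}D_k\varphi_{1,A}(\cdot;0)=0$ while $\int_{Y_1}\chi_k=0$, uniqueness up to constants gives $D_k\varphi_{1,A}(\cdot;0)=i|Y_1|^{-1/2}\chi_k$. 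Differentiating the Bloch equation a second time in $\eta_k,\eta_l$, evaluating at $\eta=0$ (where $\lambda_1$ and its first derivatives vanish), integrating over $Y_1$ to remove $\int_{Y_1}\mathcal{A}(\cdot)$, and substituting $D_k\varphi_{1,A}(\cdot;0)=i|Y_1|^{-1/2}\chi_k$, the left-hand side collapses---after discarding the integrated divergences and using $A=A^t$ (hence $A^*=(A^*)^t$)---to $2|Y_1|^{-1/2}\int_{Y_1}A(y)(\nabla\chi_k+e_k)\cdot(\nabla\chi_l+e_l)\,dy=2|Y_1|^{-1/2}a^*_{kl}$ (cf. \eqref{c7OP8}), while the right-hand side is $D^2_{kl}\lambda_1(0)\int_{Y_1}\varphi_{1,A}(\cdot;0)\,dy$; using $|Y_1|=1$ this gives $\tfrac12 D^2_{kl}\lambda_1(0)=a^*_{kl}$.

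The main obstacle is the construction in the second paragraph: Kato's theory hands over the holomorphic eigenprojection immediately, but converting it into a genuinely analytic---not merely continuous---branch of unit eigenvectors with the prescribed value $|Y_1|^{-1/2}$ at $\eta=0$ requires removing the phase ambiguity, and one must verify that the condition $\Im\int_{Y_1}\varphi_{1,A}=0$ does this on some (possibly smaller) ball, which is what ultimately determines $\delta$. Everything after that is bookkeeping; the only subtlety in the second-derivative computation is the repeated use of $Y_1$-periodicity to discard divergence terms and of the symmetry of $A$ to recognize the symmetric Dirichlet form that defines $a^*_{kl}$.
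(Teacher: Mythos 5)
Your proposal is correct, but note that the paper does not prove this proposition at all: it is quoted as a known result from \cite{CV,COV}, and the argument you give---Kato's analytic perturbation theory for the holomorphic family of type (B) $\eta\mapsto\mathcal{A}(\eta)$, simplicity and isolation of $\lambda_1(0)=0$, phase-fixing via $\Im\int_{Y_1}\varphi_{1,A}=0$, then differentiating the Bloch eigenvalue equation and integrating over the torus---is precisely the standard proof carried out in those references, and your bookkeeping (vanishing of the divergence integrals, $\int_{Y_1}D_k\varphi_{1,A}(\cdot;0)=0$ versus $\int_{Y_1}\chi_k=0$, symmetry of $A$ to recover the Dirichlet form \eqref{c7OP8}) checks out. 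The only cosmetic difference from the paper's own style is that, in its analogous later computation for $\nu_1$ and $B^{\#}$, it differentiates the scalar identity $\langle(\mathcal{B}(\eta)-\nu_1(\eta))\varphi_{1,A},\varphi_{1,A}\rangle=0$ rather than the operator equation itself, which changes nothing of substance.
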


Following that, here we give the desired Bloch spectral representation of the other two macro quantities $B^{*}$ and $B^{\#}$. 
\paragraph{Bloch Spectral representation of $B^{*}$ :}
Similarly, the eigen elements $\mu_1(\eta)$ and $\varphi_1(y;\eta)$ of the operator
$\mathcal{B}(\eta)$ defined in \eqref{c7Sd5} provide  Bloch spectral representation of the homogenized matrix $B^{*}$. 
\begin{equation}\label{c7Sd6}\begin{aligned}
&\mu_1(0) = D_k \mu_1(0) = 0, \quad  \varphi_{1,B}(.;0)= |Y_2|^{-1/2},\ \ D_k\varphi_{1,B}(.;0) = i|Y_2|^{-1/2}\zeta_k(y)\\
&\frac{1}{2}D^2_{kl}\mu_1(0)=\ b^{*}_{kl} \quad\forall k,l =1,2,..,N.
\end{aligned}\end{equation}
\paragraph{Bloch Spectral representation of $B^{\#}$ :}
\paragraph{Case $(1)\ A$ and $B$ are periodic with the same periodicity $Y$ :}
Let us define the following map
\begin{equation}\begin{aligned}\label{c7rwt}
&\nu_1 : B(\delta) \mapsto \mathbb{C} \mbox{ by }\ \nu_1(\eta) = \langle \mathcal{B}(\eta)\varphi_{1,A}(.;\eta), \varphi_{1,A}(.;\eta)\rangle_{L^2(Y)} \\
\mbox{i.e. }\ &\nu_1(\eta)=\ \frac{1}{|Y|}\int_{Y} B(y)(\nabla_y\varphi_{1,A}(y;\eta) + i\eta\varphi_{1,A}(y;\eta) )\cdot(\nabla_y\varphi_{1,A}(y;\eta)+ i \eta\varphi_{1,A}(y;\eta)) dy
\end{aligned}\end{equation}
where $\varphi_{1,A}(y;\eta)$ is the first Bloch eigen vector of the operator $\mathcal{A}(\eta)$ defined in \eqref{c7ps} and 
$\mathcal{B}(\eta)=-(\frac{\partial}{\partial y_k}+i\eta_k)(b_{kl}(y)(\frac{\partial}{\partial y_l}+i\eta_l)) $ 
is the translated operator associated with $B=[b_{kl}(y)]$.\\
\\
Now clearly $\eta \mapsto \nu_1(\eta)$ is an analytic function on $B_{\delta}$ 
as $\eta \mapsto \varphi_{1,A}(.;\eta) \in H^1_{\#}(Y)$ is analytic on $B_{\delta}$.
We compute upto second order derivatives of $\nu_1(\eta)$ at the origin based on the proposition \eqref{c7based} 
and identify the matrix $B^{\#}=[b^{\#}_{kl}]$ with $\frac{1}{2}D^2_{kl}\nu_1(0)$ for all $k,l =1,2,..,N$.\\
\\
Let us rewrite \eqref{c7rwt} by using $||\varphi_{1,A}(.;\eta)||_{L^2(Y)} = 1, \quad\eta\in B(\delta)$ 
\begin{equation}\label{c7nuq}
\langle (\mathcal{B}(\eta)-\nu(\eta))\varphi_{1,A}(.;\eta), \varphi_{1,A}(.;\eta)\rangle_{L^2(Y)} = 0            
\end{equation}
together with 
\begin{align*}
\mathcal{B}(\eta) &=\ \mathcal{B} + i\eta_k E_k + \eta_k\eta_l b_{kl}(y)\\
\mbox{where, }\quad \mathcal{B}=-\frac{\partial}{\partial y_k}(b_{kl}(y)\frac{\partial}{\partial y_l})& \quad\mbox{and }\ E_k(\varphi) = - b_{kj}(y)\frac{\partial \varphi}{\partial y_j} - \frac{\partial}{\partial y_j}(b_{kj}(y)\varphi).
\end{align*}
\paragraph{Step(i)  Zeroth order derivative of $\nu_1$ at $0$ :} As $\varphi_{1,A}(y;0) = |Y|^{-1/2}$ it implies $\nu_1(0) = 0.$
\paragraph{Step(ii) First order derivatives of $\nu_1$ at $0$ :}
By differentiating \eqref{c7nuq} once with respect to $\eta_k$ at origin we obtain
\begin{equation*}\begin{aligned}
&\langle D_k\{(\mathcal{B}(0)- \nu_1(0))\}\varphi_{1,A}(.;0),\varphi_{1,A}(.;0)\rangle + \langle (\mathcal{B}(0) - \nu_1(0))D_k\varphi_{1,A}(.;0),\varphi_{1,A}(.;0)\rangle \\
 &+ \langle (\mathcal{B}(0))-\nu_1(0)\varphi_{1,A}(.;0), D_k\varphi_{1,A}(.;0)\rangle =\ 0. 
\end{aligned}\end{equation*}
Now by using $D_k \mathcal{B}(0) = iE_k $, and $\varphi_{1,A}(.;0) = |Y|^{-1/2}$ is independent of $y$, we get 
\begin{equation*}
D_k \nu_1(0) =\ 0,\quad \forall k = 1,2..,N.
\end{equation*}
\paragraph{Step(iii) Second derivatives of $\nu_1$ at $0$ :}  We differentiate \eqref{c7nuq} twice with respect to $\eta_k$ and $\eta_l$ at origin to obtain
\begin{equation*}\begin{aligned}
&\langle [D^2_{kl} (\mathcal{B}(0) - \nu_1 (0))]\varphi_{1,A} (.;0),\varphi_{1,A}(.;0)\rangle + \langle [D_k(\mathcal{B}(0) - \nu_1(0))]D_l\varphi_{1,A}(.;0),\varphi_{1,A}(.;0)\rangle\\
&+ \langle (\mathcal{B}(0) - \nu_1(0))D^2_{kl}\varphi_{1,A} (.;0),\varphi_{1,A}(.;0)\rangle + \langle [D_k(\mathcal{B}(0) - \nu_1(0))]\varphi_{1,A}(.;0),D_l\varphi_{1,A}(.;0)\rangle \\
&+ \langle (\mathcal{B}(0) - \nu_1(0))D_k\varphi_1(.;0),D_l\varphi_{1,A}(.;0)\rangle + \langle (\mathcal{B}(0) - \nu_1(0))\varphi_{1,A} (.;0),D^2_{kl}\varphi_{1,A}(.;0)\rangle =\ 0.
\end{aligned}\end{equation*}
Now by using $\nu_1(0)= D_k \nu_1(0) = 0$ and $\varphi_{1,A}(.;0)= |Y_1|^{-1/2}, D_k \varphi_{1,A}(.;0) = i|Y_1|^{-1/2}\chi_k(y)$
 and $D_k \mathcal{B}(0) = iE_k,  D^2_{kl} = b_{kl}(y)$  we get, 
\begin{equation}\label{c7rpb1}
\frac{1}{2}D^2_{kl}\nu_1(0) = \frac{1}{|Y|}\int_Y b_{kl}(y)dy -\frac{1}{2|Y|}\int_Y (E_k\chi_l(y)+ E_l\chi_k(y))dy  = b^{\#}_{kl}
\end{equation}
-where  $b^{\#}_{kl}$ are precisely the macro coefficients defined in \eqref{c7th}. 
%And if we consider a $\psi_k(y)\in H^{1}_{\#}(Y)$ which satisfies  
%\begin{equation*}
%-div(A(y)(\nabla\psi_k ) - B(y)(\nabla\chi_k + e_k)) = 0 \quad\mbox{in }Y, \quad y \rightarrow\psi_k(y) \quad\mbox{is $Y$-periodic }, \int_Y \psi_k dy =0. 
%\end{equation*}
%Then we have the following formulation for $B^{\#}$, which follows from above result
%\begin{equation}\label{c7rpb2}
%\frac{1}{2}D^2_{kl}\nu_1(0)=\int_{Y}[ B(y)\frac{\partial}{\partial y_k}(\chi_k(y) + y_k) - A(y)\frac{\partial}{\partial y_l}(\psi_l)]dy =\ b^{\#}_{kl}.
%\end{equation}
The above expression \eqref{c7rpb1} is to be considered as the Bloch spectral representation of the limit matrix $B^{\#}$.
\paragraph{Case $2.\quad$ $A$ and $B$ are different $Y_1$ and $Y_2$ periodic respectively :} 
We assume that $a^{\epsilon_1}_{kl} = a_{kl}(\frac{x}{\epsilon_1})$ and $b^{\epsilon_2}_{kl} = b_{kl}(\frac{x}{\epsilon_2})$
with the fact  $\frac{\epsilon_1}{\epsilon_2} \rightarrow t \in [0,\infty)$ as $\epsilon_1,\epsilon_2 \rightarrow 0$, then we define
\begin{equation*}
\nu^t_1(\eta) =\frac{1}{|Y_1|}\int_{Y} B(ty)(\nabla_y\varphi_{1,A}(y;\eta) + i\eta \varphi_{1,A}(y;\eta))\cdot(\nabla_y\varphi_{1,A}(y;\eta)+ i \eta \varphi_{1,A}(y;\eta)) dy.
\end{equation*}
If $\frac{\epsilon_2}{\epsilon_1} \rightarrow s \in [0,\infty)$ as $\epsilon_1,\epsilon_2 \rightarrow 0$, then we define 
\begin{equation*}
\nu^s_1(\eta) =\frac{1}{|Y_2|}\int_{Y_2} B(y)(\nabla_y\varphi_{1,A}(sy;\eta) + i\eta\varphi_{1,A}(sy;\eta) )\cdot(\nabla_y\varphi_{1,A}(sy;\eta)+ i \eta\varphi_{1,A}(sy;\eta)) dy.
\end{equation*}
Following the same above calculations we get $\frac{1}{2}D^2_{kl}\nu^t_1(\eta)|_{\eta =0}= b^{\#}_{kl}$ which is given by 
\eqref{c7tf} and $\frac{1}{2}D^2_{kl}\nu^s_1(\eta)|_{\eta =0}= b^{\#}_{kl}$ which is given by \eqref{c7tg} respectively.
\hfill\qed
\\

Apart from the above result of regularity on the Bloch spectrum, 
one has the first Bloch transform is an approximation to the Fourier transform.
\begin{proposition}[First Bloch transform to Fourier transform, \cite{CV}]\label{c7btof} 
Let $g^{\epsilon_1}$ and $g$ be in $L^2(\mathbb{R}^N)$. Then
\begin{enumerate}
\item[(i)] if $g^{\epsilon_1}\rightharpoonup g$ weakly in $L^2(\mathbb{R}_x^N)$, then 
$\chi_{\epsilon_1^{-1}Y_1^{\prime}}B_{1,A}^{\epsilon_1}\ g^{\epsilon_1} \rightarrow \widehat{g}$ weakly in $L^2_{loc}(\mathbb{R}^N_{\xi})$ provided
there is a fixed compact set $K$ such that $supp\ (g^{\epsilon_1}) \subset K \quad\forall\epsilon_1.$
\item[(ii)]  If $g^{\epsilon_1}\rightarrow g$ strongly in $L^2(\mathbb{R}_x^N)$, then 
$\chi_{\epsilon_1^{-1}Y_1^{\prime}}B_{1,A}^{\epsilon_1}\ g^{\epsilon_1} \rightarrow \widehat{g}$ strongly in $L^2_{loc}(\mathbb{R}^N_{\xi}).$ 
\end{enumerate}
\end{proposition}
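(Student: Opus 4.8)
The statement is the one established by Conca and Vanninathan \cite{CV}; here is the route I would follow. The starting point is the defining formula \eqref{c7bt} for the first Bloch coefficient at the $\epsilon_1$-scale, which in the variables $y=x/\epsilon_1$, $\eta=\epsilon_1\xi$ reads
\[
(B^{\epsilon_1}_{1,A}\,g^{\epsilon_1})(\xi)=\int_{\mathbb{R}^N} g^{\epsilon_1}(x)\,e^{-ix\cdot\xi}\,\overline{\varphi_{1,A}}\!\left(\tfrac{x}{\epsilon_1};\epsilon_1\xi\right)dx .
\]
By Proposition \ref{c7based} the ground state at $\eta=0$ is the constant $\varphi_{1,A}(\cdot;0)=|Y_1|^{-1/2}$ (equal to $1$ since $|Y_1|=1$), so the plan is to peel off this constant,
\[
(B^{\epsilon_1}_{1,A}\,g^{\epsilon_1})(\xi)=\widehat{g^{\epsilon_1}}(\xi)+r^{\epsilon_1}(\xi),\qquad r^{\epsilon_1}(\xi):=\int_{\mathbb{R}^N} g^{\epsilon_1}(x)\,e^{-ix\cdot\xi}\Big[\overline{\varphi_{1,A}}\!\left(\tfrac{x}{\epsilon_1};\epsilon_1\xi\right)-1\Big]dx ,
\]
and to control the two pieces separately on an arbitrary fixed bounded set $L\subset\mathbb{R}^N_\xi$ (on which the cut-off $\chi_{\epsilon_1^{-1}Y_1^{\prime}}$ equals $1$ once $\epsilon_1$ is small, so it plays no role for the $L^2_{loc}$ conclusions).

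For the principal term $\widehat{g^{\epsilon_1}}$ I would argue as follows. In case (i), the hypothesis $\mathrm{supp}\,g^{\epsilon_1}\subset K$ makes $x\mapsto\mathbf{1}_K(x)e^{-ix\cdot\xi}$ an $L^2$ function for each fixed $\xi$, so $g^{\epsilon_1}\rightharpoonup g$ gives $\widehat{g^{\epsilon_1}}(\xi)\to\widehat{g}(\xi)$ pointwise in $\xi$, with the uniform bound $|\widehat{g^{\epsilon_1}}(\xi)|\le|K|^{1/2}\sup_{\epsilon_1}\|g^{\epsilon_1}\|_{L^2}$; dominated convergence on the bounded set $L$ then yields $\widehat{g^{\epsilon_1}}\to\widehat{g}$ in $L^2(L)$, in particular the weak convergence claimed in (i). In case (ii) the strong convergence $g^{\epsilon_1}\to g$ in $L^2(\mathbb{R}^N)$ together with the fact that the Fourier transform is, up to normalisation, an isometry of $L^2(\mathbb{R}^N)$ gives $\widehat{g^{\epsilon_1}}\to\widehat{g}$ strongly, a fortiori in $L^2_{loc}$.

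The crux is the remainder $r^{\epsilon_1}$, and this is where I would invoke the regularity of the Bloch ground state from Proposition \ref{c7based}: since $\eta\mapsto\varphi_{1,A}(\cdot;\eta)$ is analytic into $H^1_{\#}(Y_1)$ near the origin with value $1$ there, one has $\|\varphi_{1,A}(\cdot;\eta)-1\|_{L^2_{\#}(Y_1)}\le C|\eta|$ for $|\eta|\le\delta$. Because $x\mapsto\overline{\varphi_{1,A}}(x/\epsilon_1;\epsilon_1\xi)-1$ is $\epsilon_1 Y_1$-periodic, a change of variables turns this cell estimate into: for every fixed compact $K'$ and every $\xi\in L$,
\[
\Big\|\,\overline{\varphi_{1,A}}\!\left(\tfrac{\cdot}{\epsilon_1};\epsilon_1\xi\right)-1\,\Big\|_{L^2(K')}\le C_{K'}\,\epsilon_1\,\sup_{\xi\in L}|\xi|\;\xrightarrow[\epsilon_1\to0]{}\;0 .
\]
In case (i) I would take $K'=K$ and apply Cauchy--Schwarz to get $|r^{\epsilon_1}(\xi)|\le\|g^{\epsilon_1}\|_{L^2}\,C_K\,\epsilon_1\sup_L|\xi|\to 0$ uniformly in $\xi\in L$, hence $r^{\epsilon_1}\to 0$ in $L^2(L)$. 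In case (ii) I would first replace $g^{\epsilon_1}$ by its strong limit $g$ (using $g^{\epsilon_1}\to g$ and the crude bound $\|\overline{\varphi_{1,A}}(\cdot/\epsilon_1;\epsilon_1\xi)-1\|_{L^\infty}\le 2\sup\|\varphi_{1,A}\|_{L^\infty}$), then approximate $g$ in $L^2(\mathbb{R}^N)$ by a compactly supported function and apply the displayed bound on its support, closing the gap by an $\varepsilon$-of-room argument. Adding the two contributions proves (i) and (ii).

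The step I expect to be the real obstacle is precisely the estimate of $r^{\epsilon_1}$: one cannot naively substitute the limiting ground state $\varphi_{1,A}(\cdot;0)=1$, and in dimension $N\ge 2$ the defect $\varphi_{1,A}(x/\epsilon_1;\epsilon_1\xi)-1$ is \emph{not} pointwise small; it is small only in mean square over each period cell, by the first-order vanishing of $\varphi_{1,A}(\cdot;\eta)-1$ at the bottom of the spectrum (Proposition \ref{c7based}). The uniform-support hypothesis in (i) and the density of compactly supported functions used in (ii) are exactly the devices that let this mean-square smallness be paired against the merely $L^2$-bounded sequence $g^{\epsilon_1}$.
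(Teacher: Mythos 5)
Your architecture is the standard one — essentially the proof of \cite{CV}, which the paper itself only cites rather than reproduces: peel off the constant ground state $\varphi_{1,A}(\cdot;0)=|Y_1|^{-1/2}$, treat the resulting Fourier term by weak (resp.\ strong) convergence, and kill the remainder through the first-order vanishing of $\varphi_{1,A}(\cdot;\eta)-\varphi_{1,A}(\cdot;0)$ in $L^2_{\#}(Y_1)$ supplied by Proposition \ref{c7based}, rescaled to the $\epsilon_1$-cells and paired by Cauchy--Schwarz against $\|g^{\epsilon_1}\|_{L^2}$ on the fixed compact support. Your part (i) is complete as written (you in fact obtain strong $L^2_{loc}$ convergence, more than the weak convergence claimed), and the remark that the cut-off $\chi_{\epsilon_1^{-1}Y_1^{\prime}}$ is eventually $\equiv 1$ on any bounded set of $\xi$, so that only $\eta=\epsilon_1\xi\in B_\delta$ is ever used, is exactly the right observation.

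The one step that does not work as stated is the reduction of (ii) to the fixed limit $g$. You propose to control $\int_{\mathbb{R}^N}(g^{\epsilon_1}-g)(x)\,e^{-ix\cdot\xi}\bigl[\overline{\varphi_{1,A}}(x/\epsilon_1;\epsilon_1\xi)-1\bigr]dx$ by an $L^\infty$ bound on the bracket. But $g^{\epsilon_1}-g$ is merely in $L^2(\mathbb{R}^N)$ with no common compact support, so its product with a bounded periodic function is not in $L^1(\mathbb{R}^N)$: the integral is not defined pointwise (both $B^{\epsilon_1}_{1,A}(g^{\epsilon_1}-g)$ and $\widehat{g^{\epsilon_1}-g}$ exist only as $L^2$ objects), and pairing $\|g^{\epsilon_1}-g\|_{L^2}$ against $\|\overline{\varphi_{1,A}}(\cdot/\epsilon_1;\epsilon_1\xi)-1\|_{L^2(\mathbb{R}^N)}$ fails because the latter is infinite. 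Moreover a uniform $L^\infty$ bound on $\varphi_{1,A}(\cdot;\eta)$ is not contained in Proposition \ref{c7based}: analyticity into $H^1_{\#}(Y_1)$ does not control $L^\infty$ for $N\ge 2$; such a bound is true but requires separate elliptic regularity for the eigenfunctions. The correct and simpler device is already in the paper: Bessel's inequality contained in the Parseval identity of Proposition \ref{c7deA} gives $\|\chi_{\epsilon_1^{-1}Y_1^{\prime}}B^{\epsilon_1}_{1,A}h\|_{L^2(\mathbb{R}^N_{\xi})}\le \|h\|_{L^2(\mathbb{R}^N_x)}$ uniformly in $\epsilon_1$, and Plancherel gives the analogous bound for the Fourier transform; applying these with $h=g^{\epsilon_1}-g$ makes the replacement step immediate, after which your density argument (approximate $g$ by a compactly supported function and reuse the part-(i) cell estimate on its support, with an $\varepsilon$ of room) closes (ii).
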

\noindent
These results lead us to the following homogenization theorem in $\mathbb{R}^N$ established in \cite{CV}.
\begin{theorem}[Homogenization of the state equation, \cite{CV}]\label{c7ptf}
We consider the sequence $u^{\epsilon_1}$ satisfying $u^{\epsilon_1}\in H^1(\mathbb{R}^N)$ satisfying  
the equation $\mathcal{A}^{\epsilon_1}u^{\epsilon_1} = f $ in $\mathbb{R}^N$ where $f\in L^{2}(\mathbb{R}^N)$
with the fact that, $u^{\epsilon_1}\rightharpoonup u$ in $H^{1}(\mathbb{R}^N)$ weak and $u^{\epsilon_1}\rightarrow u$
in $L^{2}(\mathbb{R}^N)$ strong. We can express the equation in the equivalent form, then
\begin{equation*} 
\sigma^{\epsilon_1}_{k}=\ a^{\epsilon_1}_{kl}\frac{\partial u^{\epsilon_1}}{\partial x_l} \rightharpoonup a^{*}_{kl}\frac{\partial u}{\partial x_l}=\ \sigma_k \mbox{ in }L^2(\mathbb{R}^N)\quad\forall k =1,2,..,N.
\end{equation*}
In particular, $u$ satisfies $\mathcal{A}^{*}u =\ -\frac{\partial}{\partial x_l}(a^{*}_{kl}\frac{\partial}{\partial x_k}u)= f $ in $\mathbb{R}^N$. 
\hfill\qed
\end{theorem}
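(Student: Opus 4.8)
The plan is to prove this following the Bloch-wave scheme of \cite{CV}, reducing the elliptic problem to a single scalar relation in the Fourier variable by isolating the ground Bloch mode.

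First I would apply the Bloch decomposition of Proposition~\ref{c7deA} to $\mathcal{A}^{\epsilon_1}$ and to the identity $\mathcal{A}^{\epsilon_1}u^{\epsilon_1}=f$, obtaining the equivalent family $\lambda_m^{\epsilon_1}(\xi)\,(B_{m,A}^{\epsilon_1}u^{\epsilon_1})(\xi)=(B_{m,A}^{\epsilon_1}f)(\xi)$ for $m\geq 1$ and $\xi\in\epsilon_1^{-1}Y_1^{\prime}$. By Proposition~\ref{c7hm2} the projection of $u^{\epsilon_1}$ onto the modes $m\geq 2$ is $O(\epsilon_1)$ in $L^2(\mathbb{R}^N)$, so in the limit only the first relation \eqref{c7fe} survives. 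To pass to the limit in \eqref{c7fe} I would control its two factors separately: writing $\lambda_1^{\epsilon_1}(\xi)=\epsilon_1^{-2}\lambda_1(\epsilon_1\xi)$ and using the analyticity of $\lambda_1$ near the origin together with $\lambda_1(0)=0$, $D_k\lambda_1(0)=0$ and $\tfrac12 D^2_{kl}\lambda_1(0)=a^{*}_{kl}$ from Proposition~\ref{c7based}, one gets $\lambda_1^{\epsilon_1}(\xi)=a^{*}_{kl}\xi_k\xi_l+O(\epsilon_1|\xi|^3)\to a^{*}_{kl}\xi_k\xi_l$ uniformly on compact $\xi$-sets; and by Proposition~\ref{c7btof}(ii), $B_{1,A}^{\epsilon_1}u^{\epsilon_1}\to\widehat u$ (from the strong $L^2$-convergence $u^{\epsilon_1}\to u$) and $B_{1,A}^{\epsilon_1}f\to\widehat f$ (trivially, $f$ being fixed) in $L^2_{loc}(\mathbb{R}^N_\xi)$. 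Passing to the limit in \eqref{c7fe} then yields $a^{*}_{kl}\xi_k\xi_l\widehat u(\xi)=\widehat f(\xi)$, that is \eqref{c7forA}, which is exactly $\mathcal{A}^{*}u=f$ in $\mathbb{R}^N$.

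For the flux, the sequence $\sigma^{\epsilon_1}_k=a^{\epsilon_1}_{kl}\partial_l u^{\epsilon_1}$ is bounded in $L^2(\mathbb{R}^N)$, so $\sigma^{\epsilon_1}_k\rightharpoonup\sigma_k$ along a subsequence, and (after a cutoff localization so that the support hypothesis of Proposition~\ref{c7btof}(i) is met) $B_{1,A}^{\epsilon_1}\sigma^{\epsilon_1}_k\rightharpoonup\widehat{\sigma_k}$ in $L^2_{loc}$. To identify $\sigma_k$ I would compute $B_{1,A}^{\epsilon_1}\sigma^{\epsilon_1}_k$ directly: integrating by parts and using the spectral equation \eqref{c7b1}--\eqref{c7ps} satisfied by the scaled eigenfunction $e^{ix\cdot\xi}\varphi^{\epsilon_1}_{1,A}(x;\xi)$, this pairing re-expresses through $B_{1,A}^{\epsilon_1}u^{\epsilon_1}$ and the $\eta$-derivatives of $\varphi_{1,A}$ at $\eta=0$. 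The first-order coefficient $D_k\varphi_{1,A}(\cdot;0)=i|Y_1|^{-1/2}\chi_k$ of Proposition~\ref{c7based} is precisely what injects the corrector $\chi_k$, so that letting $\epsilon_1\to0$ (again via Propositions~\ref{c7based} and \ref{c7btof}, and the cell-problem identity $a^{*}_{kl}=\int_Y a_{kl}(\nabla\chi_k+e_k)\cdot(\nabla\chi_l+e_l)\,dy$) produces $\widehat{\sigma_k}(\xi)=i\,a^{*}_{kl}\xi_l\,\widehat u(\xi)$, i.e.\ $\sigma_k=a^{*}_{kl}\partial_l u$. The limit being subsequence-independent, the whole sequence converges; combined with $-\partial_k\sigma_k=f$ (stable under weak $L^2$-convergence) this reproves $\mathcal{A}^{*}u=f$.

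The step I expect to be the main obstacle is this last flux identification: one must commute the Bloch transform with both the spatial gradient and multiplication by the rapidly oscillating coefficient $a^{\epsilon_1}_{kl}$, and make the first-order $\xi$-expansion of the Bloch eigenvector uniform as the Brillouin zone $\epsilon_1^{-1}Y_1^{\prime}$ invades all of $\mathbb{R}^N$ --- here the global lower bound $\lambda_1^{\epsilon_1}(\xi)\geq c|\xi|^2$ on the Brillouin zone (which rests on the positivity of $\lambda_1$ away from $\eta=0$, not merely on its Hessian at the origin, and underlies Proposition~\ref{c7hm2}) is what lets the high-frequency tails be discarded. A secondary technicality is the cutoff localization needed to reconcile the global data $f\in L^2(\mathbb{R}^N)$, $u^{\epsilon_1}\to u$ in $L^2(\mathbb{R}^N)$ with the compact-support condition of Proposition~\ref{c7btof}(i).
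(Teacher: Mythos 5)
Your proposal is correct and follows essentially the same route that the paper itself recalls (and attributes to \cite{CV}) in Section \ref{c7en4}: Bloch decomposition of $\mathcal{A}^{\epsilon_1}u^{\epsilon_1}=f$, discarding the modes $m\geq 2$ via Proposition \ref{c7hm2}, and passing to the limit in the first Bloch relation \eqref{c7fe} using the ground-state regularity of Proposition \ref{c7based} together with Proposition \ref{c7btof} to reach \eqref{c7forA}, with the flux identified through the first-order term $D_k\varphi_{1,A}(\cdot;0)=i|Y_1|^{-1/2}\chi_k$. The paper gives no independent proof beyond this outline, and your added technical remarks (localization to meet the support hypothesis, uniformity of the expansion of $\lambda_1^{\epsilon_1}$ on compact $\xi$-sets) are consistent with the argument of \cite{CV}.
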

\noindent 
Once the homogenization result in $\mathbb{R}^N$ is established, it is easy  to
deduce the corresponding result in a bounded domain $\Omega$ by localization techniques
using a cut-off function $\varphi\in D(\Omega)$.\\

Using all these above tools defined so far, we will establish the limit system \eqref{c7Sd1}
in our final analysis.  
\section{Homogenization result}\label{c7en5}
\setcounter{equation}{0}
In this section, we derive of the main result of homogenization stated below.
It will be based on the tools whatever we have discussed in the previous section. 
\begin{theorem}\label{c7pst}
Let us consider $\Omega$ be an open set in $\mathbb{R}^N$. Let $Y_1$ and $Y_2$ are two periodic cell, then we define the operators
\begin{equation*}
\mathcal{A}^{\epsilon} = -\frac{\partial}{\partial x_k}(a^{\epsilon}_{kl}\frac{\partial}{\partial x_l})\quad\mbox{in }\Omega,\ \mbox{ where  }a^{\epsilon}_{kl}(x) = a(\frac{x}{\epsilon})_{kl} \mbox{ in } \epsilon Y_1 \mbox{ ; }\ \Omega\approx \cup\epsilon Y_1
\end{equation*}
and 
\begin{equation*}\begin{aligned}
&\mathcal{B}^{\epsilon} = -\frac{\partial}{\partial x_k}(b^{\epsilon}_{kl}\frac{\partial}{\partial x_l})\quad\mbox{in }\Omega,\ \mbox{ with }b^{\epsilon}_{kl}(x) = b(\frac{x}{h(\epsilon)})_{kl} \mbox{ in } h(\epsilon)Y_2; \\
&\mbox{ where }h(\epsilon)\rightarrow 0 \mbox{ as }\epsilon \rightarrow 0 \mbox{ considered as a new scale with }\Omega\approx \cup h(\epsilon)Y_2. 
\end{aligned}\end{equation*}
\\
For a given $f \in L^2(\Omega),$ let $u^{\epsilon} \in H^1_0(\Omega)$ be the
unique solution of the state equation
\begin{equation*} \mathcal{A}^{\epsilon}u^{\epsilon} = f \mbox{ in }\Omega\end{equation*}
and $p^{\epsilon}\in H^1_0(\Omega)$ be the unique solution for the adjoint state equation  
\begin{equation*}\mathcal{A}^{\epsilon}p^{\epsilon} = \mathcal{B}^{\epsilon}u^{\epsilon} \quad\mbox{in }\Omega.\end{equation*}
Then there exists a $u\in H^1_{0}(\Omega)$ and $p\in H^1_0(\Omega)$ such that the sequences
$u^{\epsilon}$ and $p^{\epsilon}$ converges to $u$ and $p$ respectively in $H^1_{0}(\Omega)$ 
weak with the following convergence of fluxes
\begin{equation*}\sigma^{\epsilon}_{k}=\ a^{\epsilon}_{kl}\frac{\partial u^{\epsilon}}{\partial x_l} \rightharpoonup a^{*}_{kl}\frac{\partial u}{\partial x_l}=\ \sigma_{k} \mbox{ in }L^2(\Omega)\quad\forall k =1,2,..,N.\end{equation*}
In particular, $u$ satisfies $\mathcal{A}^{*}u=\ -\frac{\partial}{\partial x_k}(a^{*}_{kl}\frac{\partial}{\partial x_l}u)= f $ in $\Omega$ and,
\begin{equation*}z^{\epsilon}_{k}=\ a^{\epsilon}_{kl}\frac{\partial p^{\epsilon}}{\partial x_l} - B^{\epsilon}_{kl}\frac{\partial u^{\epsilon}}{\partial x_l} \rightharpoonup a^{*}_{kl}\frac{\partial p}{\partial x_l}- b^{\#}_{kl}\frac{\partial u}{\partial x_l}= z_{k} \mbox{ in }L^2(\Omega). \end{equation*}
Moreover they satisfy, 
\begin{equation*} \mathcal{A}^{*}p=\ -\frac{\partial}{\partial x_k}(a^{*}_{kl}\frac{\partial}{\partial x_l})p=\ -\frac{\partial}{\partial x_k}(b^{\#}_{kl}\frac{\partial}{\partial x_l})u=\ \mathcal{B}^{\#}u\quad\mbox{ in }\Omega.\end{equation*}
\end{theorem}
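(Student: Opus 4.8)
The plan is to carry the Bloch--wave homogenization scheme of \cite{CV} over to the present pair of microstructures, borrowing the state--equation part wholesale and concentrating the new work on the adjoint equation. First I would record the a priori bounds. Since $\Omega$ is bounded and $B^\epsilon$ is uniformly bounded, $\mathcal{B}^\epsilon u^\epsilon$ is bounded in $H^{-1}(\Omega)$, so $\|u^\epsilon\|_{H^1_0(\Omega)}+\|p^\epsilon\|_{H^1_0(\Omega)}\le C$; passing to a subsequence, $u^\epsilon\rightharpoonup u$ and $p^\epsilon\rightharpoonup p$ weakly in $H^1_0(\Omega)$ and, by Rellich, strongly in $L^2(\Omega)$. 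Multiplying by a cut--off $\varphi\in\mathcal{D}(\Omega)$ transports the problem to $\mathbb{R}^N$ modulo commutator terms handled exactly as in \cite{CV}, so it suffices to argue in $\mathbb{R}^N$. The statements on $u$, namely $u^\epsilon\to u$, $\sigma^\epsilon\rightharpoonup A^*\nabla u$ and $\mathcal{A}^*u=f$, are then Theorem \ref{c7ptf} (equivalently \eqref{c7forA}), which rests on Proposition \ref{c7based}: $\lambda_1^\epsilon(\xi)=\epsilon^{-2}\lambda_1(\epsilon\xi)\to a^*_{kl}\xi_k\xi_l$ because $\lambda_1(0)=D_k\lambda_1(0)=0$, $\tfrac12 D^2_{kl}\lambda_1(0)=a^*_{kl}$, together with Proposition \ref{c7btof}.

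Next I would apply the first Bloch transform $B^\epsilon_{1,A}$ attached to $\mathcal{A}^\epsilon$ to the adjoint equation $\mathcal{A}^\epsilon p^\epsilon=\mathcal{B}^\epsilon u^\epsilon$, obtaining the exact relation \eqref{c7afe}, $\lambda_1^\epsilon(\xi)(B^\epsilon_{1,A}p^\epsilon)(\xi)=(B^\epsilon_{1,A}(\mathcal{B}^\epsilon u^\epsilon))(\xi)$ on $\epsilon^{-1}Y_1'$. On the left, $\lambda_1^\epsilon(\xi)\to a^*_{kl}\xi_k\xi_l$ and, since $p^\epsilon\to p$ strongly in $L^2$, $\chi_{\epsilon^{-1}Y_1'}B^\epsilon_{1,A}p^\epsilon\to\widehat p$ strongly in $L^2_{loc}$ by Proposition \ref{c7btof}(ii), so the left side tends to $a^*_{kl}\xi_k\xi_l\,\widehat p(\xi)$. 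The right side is the crux, and is where the non--commutativity of $B^\epsilon_{1,A}$ with $\mathcal{B}^\epsilon$ produces $B^\#$. In the equal--scale case $h(\epsilon)=\epsilon$, $Y_1=Y_2=Y$, I would first replace $u^\epsilon$ by its first Bloch band: the modes $m\ge 2$ cost only $O(\epsilon)$ in $H^1$ (since $\lambda_m^\epsilon\ge c\epsilon^{-2}$ for $m\ge 2$, cf.\ Proposition \ref{c7hm2}), hence, after applying $\mathcal{B}^\epsilon$ --- bounded from $H^1$ into $H^{-1}$ --- and then $B^\epsilon_{1,A}$ --- bounded from $H^{-1}$ into $L^2_{loc}$ uniformly in $\epsilon$ because $\lambda_1^\epsilon$ is locally bounded --- they contribute only $o(1)$. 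On the first band, $\mathcal{B}^\epsilon\big(e^{ix\cdot\zeta}\varphi_{1,A}(x/\epsilon;\epsilon\zeta)\big)=\epsilon^{-2}e^{ix\cdot\zeta}\big[\mathcal{B}(\eta)\varphi_{1,A}(\cdot;\eta)\big](x/\epsilon)$ with $\eta=\epsilon\zeta$; decomposing $\mathcal{B}(\eta)\varphi_{1,A}(\cdot;\eta)=\nu_1(\eta)\varphi_{1,A}(\cdot;\eta)+r(\cdot;\eta)$ with $\langle r,\varphi_{1,A}(\cdot;\eta)\rangle_{L^2(Y)}=0$, the remainder $r$ sits in the higher Bloch bands of $\mathcal{A}(\eta)$ and is therefore annihilated by $B^\epsilon_{1,A}$. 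Hence $(B^\epsilon_{1,A}(\mathcal{B}^\epsilon u^\epsilon))(\xi)=\epsilon^{-2}\nu_1(\epsilon\xi)\,(B^\epsilon_{1,A}u^\epsilon)(\xi)+o(1)$, and since $\nu_1(0)=D_k\nu_1(0)=0$ and $\tfrac12 D^2_{kl}\nu_1(0)=b^\#_{kl}$ (the computation following \eqref{c7rwt}, ending in \eqref{c7rpb1}) we get $\epsilon^{-2}\nu_1(\epsilon\xi)\to b^\#_{kl}\xi_k\xi_l$, while $\chi_{\epsilon^{-1}Y_1'}B^\epsilon_{1,A}u^\epsilon\to\widehat u$ strongly in $L^2_{loc}$. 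Passing to the limit gives $a^*_{kl}\xi_k\xi_l\,\widehat p(\xi)=b^\#_{kl}\xi_k\xi_l\,\widehat u(\xi)$ a.e., i.e.\ $\mathcal{A}^*p=\mathcal{B}^\#u$ after inverting the localization.

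For the general scale $h(\epsilon)$ the same computation applies with $\mathcal{B}^\epsilon$ acting at scale $h(\epsilon)$: if $h(\epsilon)/\epsilon\to t\in[0,\infty)$ (resp.\ $\epsilon/h(\epsilon)\to s\in[0,\infty)$), then in the variable $y=x/\epsilon$ the coefficient $b_{kl}(x/h(\epsilon))$ is seen as a $b_{kl}(ty)$--type oscillation (resp.\ as slowly varying), and an averaging argument as in \cite[p.~57]{A} replaces $\nu_1$ by $\nu_1^t$ (resp.\ $\nu_1^s$); by the same differentiation at $\eta=0$ this still yields $\tfrac12 D^2_{kl}\nu_1^t(0)=b^\#_{kl}$ as in \eqref{c7tf} (resp.\ \eqref{c7tg}). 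Finally, for the fluxes: $\sigma^\epsilon\rightharpoonup A^*\nabla u$ is part of Theorem \ref{c7ptf}; for $z^\epsilon=A^\epsilon\nabla p^\epsilon-B^\epsilon\nabla u^\epsilon$ one has $\operatorname{div}z^\epsilon=0$ and $\|z^\epsilon\|_{L^2}\le C$, so $z^\epsilon\rightharpoonup z$ with $\operatorname{div}z=0$, and $z=A^*\nabla p-B^\#\nabla u$ follows from the div--curl identities \eqref{c7eir}--\eqref{c7FG13} of Section \ref{c7en1} used with the explicit periodic correctors $\epsilon\chi_k(x/\epsilon)$ and $\epsilon\psi_k(x/\epsilon)$ (or, equivalently, by Bloch--transforming $z^\epsilon$ and repeating the computation above). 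Uniqueness of $u,p$ then upgrades the convergences to the whole sequence. I expect the main obstacle to be the right--hand side limit: quantifying, uniformly in $\epsilon$, the contribution of the higher Bloch bands after multiplication by the singular factor $\epsilon^{-2}$ carried by $\mathcal{B}^\epsilon$, and, in the unequal--scale regime, making the two--scale averaging that produces $\nu_1^t$ or $\nu_1^s$ rigorous.
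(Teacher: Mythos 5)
Your treatment of the main term is essentially the paper's own argument in different clothing: projecting $u^\epsilon$ onto the first Bloch band (discarding $m\ge 2$ via Proposition \ref{c7hm2} and the spectral gap), transferring $\mathcal{B}^\epsilon$ to the product of first Bloch waves, and recognizing the coefficient $\epsilon^{-2}\nu_1(\epsilon\xi)\to b^{\#}_{kl}\xi_k\xi_l$ is exactly what Step~5 of the paper does by expanding $vu^\epsilon$ in its Bloch decomposition and Taylor-expanding $\varphi_{1,A}$; your phrasing through $\nu_1(\eta)=\langle\mathcal{B}(\eta)\varphi_{1,A},\varphi_{1,A}\rangle$ and \eqref{c7rpb1} is a clean way to say the same thing, and your two-scale replacement of $\nu_1$ by $\nu_1^t$ or $\nu_1^s$ matches the paper's treatment leading to \eqref{c7tf}--\eqref{c7tg}. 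Likewise the left-hand side via Proposition \ref{c7btof} and Proposition \ref{c7based} is identical to the paper's Step~2.

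Where you diverge — and where your write-up has a soft spot — is the localization. You assert the commutator terms are ``handled exactly as in [CV]'', but they are not: after cutting off, the equation \eqref{c7local} produces terms $g_1^\epsilon+h_1^\epsilon$ \emph{and} $g_2^\epsilon+h_2^\epsilon$, and only the combination is tractable, because the bounded object is the mixed flux $z^\epsilon=A^\epsilon\nabla p^\epsilon-B^\epsilon\nabla u^\epsilon$ (neither $A^\epsilon\nabla p^\epsilon$ nor $B^\epsilon\nabla u^\epsilon$ has an independently identified weak limit), and the first-order-coefficient terms require the decomposition and Taylor analysis of the paper's Step~4, which brings in the genuinely new two-scale weak* limits of $b_{kl}(x/h(\epsilon))\,\partial^2\overline{\varphi_1}/\partial\eta_j\partial y_k(x/\epsilon;0)$ — objects absent from [CV]. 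In the paper these commutator limits are not a nuisance to be discarded: they are precisely what allows the localized Fourier equation \eqref{c7lch} to be decoupled, by the oscillating test functions $v=v_0e^{inx\cdot\omega}$, into \emph{both} conclusions, the flux identity $z=A^{*}\nabla p-B^{\#}\nabla u$ and $\mathcal{A}^{*}p=\mathcal{B}^{\#}u$. You instead obtain the flux identity from the physical-space div--curl computation \eqref{c7eir}--\eqref{c7FG13} with the periodic correctors; that is legitimate in the periodic case, but note that once you have it, $\operatorname{div}z=0$ already gives the homogenized adjoint equation, so your Bloch computation of the limit equation becomes redundant — and conversely, if the Bloch route is to stand on its own (which is the point of the paper), the commutator analysis of Steps~3--4 cannot be skipped, since the limit equation you produce is for $vp$, not $p$, and its right-hand side contains exactly those commutator limits. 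So: the core spectral identification is sound and coincides with the paper; the flux part is proved by a different (corrector/div--curl) route; and the claim that the localization terms reduce to [CV] should be replaced either by the paper's Steps~3--4 or by an explicit appeal to the div--curl argument as the source of both the flux identity and the equation.
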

\begin{proof}[Proof of the Theorem \ref{c7pst}]
The first part of the theorem is finding the limit equation for the state $u$ which simply follows from the
work of \cite{CV} in particular the Theorem \ref{c7ptf} stated in the previously. \\
So we move into the second part for finding the limit equation for the adjoint state $p$. We start with the cut-off function technique to localize the equation.
\paragraph{Step 1. Localization :}
Let $v \in D(\Omega)$ be arbitrary. Then the localization $vp^{\epsilon}$ satisfies
\begin{equation}\label{c7local} \mathcal{A}^{\epsilon}(vp^{\epsilon}) = v\mathcal{B}^{\epsilon}(u^{\epsilon}) + g_1^{\epsilon} + h_1^{\epsilon}\quad\mbox{in }\mathbb{R}^N \end{equation}
where,
\begin{equation*}
g_1^{\epsilon}=\ -2a^{\epsilon}_{kl}\frac{\partial p^{\epsilon}}{\partial x_l}\frac{\partial v}{\partial x_k} - a^{\epsilon}_{kl}\frac{\partial^2v}{\partial x_k\partial x_l}p^{\epsilon},\quad h_1^{\epsilon}  =\ -\frac{\partial a^{\epsilon}_{kl}}{\partial x_k}\frac{\partial v}{\partial x_l}p^{\epsilon}
\end{equation*}
$g_1^{\epsilon}$ and $h_1^{\epsilon}$ correspond to terms containing zeroth and first order derivatives on $a^{\epsilon}_{kl}$ respectively.
And
\begin{equation}\label{c7local2}
v\mathcal{B}^{\epsilon}(u^{\epsilon}) = \mathcal{B}^{\epsilon}(vu^{\epsilon}) + g_2^{\epsilon} + h_2^{\epsilon}\quad\mbox{in }\mathbb{R}^N \end{equation}
where,
\begin{equation*}
g_2^{\epsilon} =\ 2b^{\epsilon}_{kl}\frac{\partial u^{\epsilon}}{\partial x_l}\frac{\partial v}{\partial x_k} + b^{\epsilon}_{kl}\frac{\partial^2v}{\partial x_k\partial x_l}u^{\epsilon},\quad h_2^{\epsilon}  =\ \frac{\partial b^{\epsilon}_{kl}}{\partial x_k}\frac{\partial v}{\partial x_l}u^{\epsilon}
\end{equation*}
$g_2^{\epsilon}$ and $h_2^{\epsilon}$
correspond to terms containing zeroth and first order derivatives on $b^{\epsilon}_{kl}$ respectively.
\paragraph{Step 2. Limit of the LHS of \eqref{c7local} :}
We consider the first Bloch transform $B_1^{\epsilon}(\xi)$ \eqref{c7bt} of the equation \eqref{c7local} and
determine the limit in the Fourier space. Let us first consider the LHS of \eqref{c7local}, 
after taking the first Bloch transformation
we get $\lambda_1^{\epsilon}(\xi)B_1^{\epsilon}(vp^{\epsilon})$. 
Since $v$ has compact support thus $vp^{\epsilon} \rightarrow vp$ 
in $L^2(\mathbb{R}^N)$ strong, so by using the Proposition \ref{c7btof} we get 
\begin{equation*}
\chi_{\epsilon^{-1}Y_1^{\prime}}(\xi)\lambda_1^{\epsilon}(\xi)B_1^{\epsilon}(vu^{\epsilon}) \rightarrow \frac{1}{2}D^2_{kl}\lambda_1(0)\xi_k\xi_l\widehat{vp}(\xi)\quad\mbox{in }L^2_{loc}(\mathbb{R}^N)\mbox{ strong.}
\end{equation*}
\paragraph{Step(3). Limit of $B_1^{\epsilon}(g_1^{\epsilon} +g_2^{\epsilon})$ :}
Since $z^{\epsilon}=a^{\epsilon}_{kl}\frac{\partial p}{\partial x_l}- b^{\epsilon}_{kl}\frac{\partial u}{\partial x_l}$ 
is bounded in $L^2(\Omega)$, 
there exists a convergent subsequence with limit $z\in L^2(\Omega)$ and
we extent it by zero outside $\Omega$.
Thus,
\begin{equation*}
g_1^{\epsilon} +g_2^{\epsilon} \rightharpoonup g 
= -2z_k\frac{\partial v}{\partial x_k} - (M_{Y_1}({a_{kl}})p - M_{Y_2}(b_{kl})u)\frac{\partial^2v}{\partial x_k\partial x_l} \quad\mbox{in }L^2_{loc}(\mathbb{R}^N)\mbox{ weak}.
\end{equation*}
where $M_{Y_1}({a_{kl}})$ and $M_{Y_2}({b_{kl}})$ is the $L^{\infty}$ weak* limit of $a^{\epsilon}_{kl}$
and $b^{\epsilon}_{kl}$ satisfying $M_{Y_1}({a_{kl}}) =\frac{1}{|Y_1|}\int_{Y_1} a_{kl}(y) dy$ 
and $M_{Y_2}({b_{kl}}) =\frac{1}{|Y_2|}\int_{Y_2} b_{kl}(y) dy$ respectively.\\
\\
Thus, by applying Proposition \ref{c7btof} we have
\begin{equation*}
\chi_{\epsilon^{-1}Y_1^{\prime}}(\xi)B_1^{\epsilon}(g_1^{\epsilon}+g^{\epsilon}_2)(\xi) \rightharpoonup \widehat{g}(\xi)
\quad\mbox{in }L^2(\mathbb{R}^N)\mbox{ weak}. 
\end{equation*}
As we see, through integration by parts
\begin{equation*}\begin{aligned}
\widehat{g}(\xi) = \frac{1}{|Y|^{1/2}}\int_{\mathbb{R}^N} &[-2z_k\frac{\partial v}{\partial x_k} + (M_{Y}(a_{kl})\frac{\partial p}{\partial x_k}-M_{Y}(b_{kl})\frac{\partial u}{\partial x_k})\frac{\partial v}{\partial x_l} \\
                 &- (i\xi_k)(M_{Y}(a_{kl})\frac{\partial v}{\partial x_l}p - M_{Y}(b_{kl})\frac{\partial v}{\partial x_l}u)] e^{-ix.\xi} dx.
\end{aligned}\end{equation*}
\paragraph{Step 4. Limit of $B_1^{\epsilon}(h_1^{\epsilon}+h_2^{\epsilon})(\xi)$ :}
Here we see that  $h_1^{\epsilon}$, $h_2^{\epsilon}$ are
uniformly supported in a fixed compact set and
bounded in $H^{-1}(\mathbb{R}^N)$ but not in $L^2(\mathbb{R})$. 
So in order to calculate $B_1^{\epsilon}(h_1^{\epsilon}+h_2^{\epsilon})(\xi)$ we use the idea of 
decomposition.
\begin{equation}\begin{aligned}\label{c7abv0}
B_1^{\epsilon}(h_1^{\epsilon}+ h_2^{\epsilon})(\xi) = 
&\int_{\mathbb{R}^N}( h_1^{\epsilon}+ h_2^{\epsilon})(x)e^{-ix.\xi}\overline{\varphi_1}(\frac{x}{\epsilon};0)dx \\
&+ \int_{\mathbb{R}^N}( h_1^{\epsilon}+h_2^{\epsilon})(x)e^{-ix.\xi}\lb\overline{\varphi_1}(\frac{x}{\epsilon},\epsilon\xi)- \overline{\varphi_1}(\frac{x}{\epsilon};0)\rb dx . 
\end{aligned}\end{equation}
We start with the second term of the RHS, by following the Taylor expansion of $\varphi_1(y;\eta)$ we get
\begin{equation*}
-\int_{\mathbb{R}^N}(\frac{\partial a^{\epsilon}_{kl}}{\partial x_k}p^{\epsilon}-\frac{\partial b^{\epsilon}_{kl}}{\partial x_k}u^{\epsilon})\frac{\partial v}{\partial x_l}  
e^{-ix.\xi}\lb \epsilon\frac{\partial\overline{\varphi_1}}{\partial \eta_j}(\frac{x}{\epsilon};0)\xi_j + \mathcal{O}(\epsilon^2\xi^2)\rb dx 
\end{equation*}
which via integrating by parts becomes 
\begin{equation}\label{c7abv1}
\xi_j \int_{\mathbb{R}^N} (a^{\epsilon}_{kl}p^{\epsilon}-b^{\epsilon}_{kl}u^{\epsilon})\frac{\partial v}{\partial x_l}e^{-ix.\xi}
\frac{\partial^2\overline{\varphi_1}}{\partial \eta_j\partial y_k}(\frac{x}{\epsilon};0) dx  + \mathcal{O}(\epsilon\xi).
\end{equation}
The above integral term converges in $L^2_{loc}(\mathbb{R}^N)$ to
\begin{equation}\label{c7abv2}
\lb\frac{1}{|Y_1|}\int_{Y_1}a_{kl}(y)\frac{\partial^2\overline{\varphi_1}}{\partial \eta_j\partial y_k}(y;0)dy\rb \xi_j \int_{\mathbb{R}^N} \frac{\partial v}{\partial x_l}p e^{-ix.\xi} dx -L\xi_j \int_{\mathbb{R}^N} \frac{\partial v}{\partial x_l}u e^{-ix.\xi} dx
\end{equation}
where \begin{equation*}
L = L^{\infty}\mbox{ weak* limit }\lb b_{kl}(\frac{x}{f(\epsilon)})\frac{\partial^2\overline{\varphi_1}}{\partial \eta_j\partial y_k}(\frac{x}{\epsilon};0)\rb
\end{equation*}
If $\frac{\epsilon}{f(\epsilon)} \rightarrow t \in [0,\infty)$ as $\epsilon \rightarrow 0$, then  
\begin{equation*}
L =  \frac{1}{|Y_1|}\int_{Y_1}b_{kl}(ty)\frac{\partial^2\overline{\varphi_1}}{\partial \eta_j\partial y_k}(y;0)dy
\end{equation*}
and if $\frac{f(\epsilon)}{\epsilon} \rightarrow s \in [0,\infty)$ as $\epsilon \rightarrow 0$, then  
\begin{equation*}
L =  \frac{1}{|Y_2|}\int_{Y_2}b_{kl}(y)\frac{\partial^2\overline{\varphi_1}}{\partial \eta_j\partial y_k}(sy;0)dy.
\end{equation*}
Let $I_1^{\epsilon}(x) = a^{\epsilon}_{kl}\frac{\partial v}{\partial x_l}p^{\epsilon}e^{-ix.\xi}
\frac{\partial^2\overline{\varphi_1}}{\partial \eta_k\partial \eta_l}(\frac{x}{\epsilon};0)\in L^1(\mathbb{R}^N_x)$
and $I_2^{\epsilon}(x) = b^{\epsilon}_{kl}\frac{\partial v}{\partial x_l}u^{\epsilon}e^{-ix.\xi}
\frac{\partial^2\overline{\varphi_1}}{\partial \eta_k\partial \eta_l}(\frac{x}{\epsilon};0)
\in L^1(\mathbb{R}^N_x)$
then the above integrand term \eqref{c7abv1} is $\widehat{(I_1^{\epsilon}-I_2^{\epsilon})}(\xi)\in L^{\infty}_{loc}(\mathbb{R}^N_{\xi})$
and consequently $\widehat{(I_1^{\epsilon}-I_2^{\epsilon})}(\xi)\rightarrow \widehat{I}(\xi)$ $\forall \xi\in\mathbb{R}^N$
where $I(x)$  is given by \eqref{c7abv2}. Thus $\xi_j\widehat{(I_1^{\epsilon}-I_2^{\epsilon})}(\xi)\rightarrow \xi_j\widehat{I}(\xi)$ in $L^2_{loc}(\mathbb{R}^N)$ strongly.\\
Now consider the first term of the RHS of \eqref{c7abv0}, after doing integration by parts, 
one has
\begin{align*}
\frac{1}{|Y_1|^{\frac{1}{2}}}\int_{\mathbb{R}^N} a^{\epsilon}_{kl} 
[\frac{\partial^2v}{\partial x_k\partial x_l}p^{\epsilon} + \frac{\partial v}{\partial x_l}\frac{\partial p^{\epsilon}}{\partial x_k} &-i\xi_k\frac{\partial v}{\partial x_l}p^{\epsilon}]e^{-ix.\xi} dx  \\
-&\frac{1}{|Y_1|^{\frac{1}{2}}}\int_{\mathbb{R}^N} b^{\epsilon}_{kl} 
[\frac{\partial^2v}{\partial x_k\partial x_l}u^{\epsilon} + \frac{\partial v}{\partial x_l}\frac{\partial u^{\epsilon}}{\partial x_k} -i\xi_k\frac{\partial v}{\partial x_l}u^{\epsilon}]e^{-ix.\xi} dx.  
\end{align*}
By the similar way as we just have done the limit of the above equation would be
\begin{equation*}\begin{aligned}
\frac{1}{|Y_1|^{\frac{1}{2}}}\int_{\mathbb{R}^N}[ M_{Y_1}({a}_{kl}) 
\frac{\partial^2v}{\partial x_k\partial x_l}p &+ z_l\frac{\partial v}{\partial x_l} - (i\xi_k) M_{Y_1}(a_{kl})\frac{\partial v}{\partial x_l}p]e^{-ix.\xi} dx  \\
\frac{1}{|Y_1|^{\frac{1}{2}}}&\int_{\mathbb{R}^N}[ M_{Y_2}({b}_{kl}) 
\frac{\partial^2v}{\partial x_k\partial x_l}u -(i\xi_k) M_{Y_1}(b_{kl})\frac{\partial v}{\partial x_l}u]e^{-ix.\xi} dx  
\end{aligned}\end{equation*}
and again performing the integration by parts in the first term, we see the above is equal to
\begin{equation}\label{c7NE2}
\frac{1}{|Y_1|^{\frac{1}{2}}}\int_{\mathbb{R}^N}\lb z_l\frac{\partial v}{\partial x_l} 
- M_{Y_1}(a_{kl})\frac{\partial v}{\partial x_l}\frac{\partial p}{\partial x_k} 
+ M_{Y_2}(b_{kl})\frac{\partial v}{\partial x_l}\frac{\partial u}{\partial x_k}\rb e^{-ix.\xi}dx 
\end{equation}
Now combining \eqref{c7NE2} and \eqref{c7abv2} and using the fact 
$\frac{\partial\overline{\varphi_1}}{\partial \eta_j}(y;0) = -i|Y_1|^{-1/2}(\chi^{k}(y))$,
we see that $\chi_{\epsilon}^{-1}Y^{\prime}B_1^{\epsilon}(h_1^{\epsilon}+h_2^{\epsilon})(\xi)$ converges 
strongly in $L^2_{loc}(\mathbb{R}^N)$ to
\begin{equation*}\begin{aligned}
&-|Y_1|^{-1/2}M_{Y_1}\lb a_{kl}\frac{\partial(\chi^{j}(y))}{\partial y_k}\rb(i\xi_j)\int_{\mathbb{R}^N} \frac{\partial v}{\partial x_l}p e^{-ix.\xi}dx \\
&+|Y_1|^{-1/2}\int_{\mathbb{R}^N}\lb z_l \frac{\partial v}{\partial x_l} - M_{Y_1}(a_{kl})\frac{\partial v}{\partial x_l}\frac{\partial p}{\partial x_k}\rb e^{-ix.\xi} dx\\
&+|Y_1|^{-1/2} L\cdot(i\xi_j)\int_{\mathbb{R}^N} \frac{\partial v}{\partial x_l}u e^{-ix.\xi}dx +|Y_1|^{-1/2}\int_{\mathbb{R}^N}\lb M_{Y_2}(b_{kl})\frac{\partial v}{\partial x_l}\frac{\partial u}{\partial x_k}\rb e^{-ix.\xi} dx.
\end{aligned}\end{equation*}
\paragraph{Step 5. Limit of $\mathcal{B}^{\epsilon}(vu^{\epsilon})$ :}
\begin{equation}\label{c7bsint}\begin{aligned}
B_1^{\epsilon}\lb (-\frac{\partial}{\partial x_k}(b^{\epsilon}_{kl}\frac{\partial}{\partial x_l})vu^{\epsilon}\rb 
&=\ \int_{\mathbb{R}^N} (-\frac{\partial}{\partial x_k}b_{kl} ^{\epsilon}\frac{\partial}{\partial x_l}(vu^{\epsilon})) e^{-i x\xi}\overline{\varphi_1}(\frac{x}{\epsilon},\epsilon\xi) dx\\ 
&=\ \int_{\mathbb{R}^N}(vu^{\epsilon})(\frac{\partial}{\partial x_k}b_{kl} ^{\epsilon}\frac{\partial}{\partial x_l}(e^{-i x\xi}\overline{\varphi_1}(\frac{x}{\epsilon},\epsilon\xi))) dx. 
\end{aligned}\end{equation}
As we have
\begin{equation*}\frac{\partial}{\partial x_k}b_{kl}^{\epsilon}\frac{\partial}{\partial x_l}(e^{-ix.\xi}\overline{\varphi_1}(\frac{x}{\epsilon},\epsilon\xi))= \lb(\frac{\partial}{\partial x_k}+ i\xi_k)(b_{kl}^{\epsilon}(\frac{\partial}{\partial x_l}+i\xi_l)(\overline{\varphi_1}(\frac{x}{\epsilon},\epsilon\xi))\rb e^{-ix.\xi},\end{equation*}
and using the Bloch decomposition \eqref{c7ibt} we write $vu^{\epsilon}$ 
\begin{align*}
vu^{\epsilon}&=\ \int_{\epsilon^{-1}Y_1^{\prime}}\sum_{m=1}^{\infty} B^{\epsilon}_m (vu^{\epsilon})(\xi)e^{ix\cdot\xi} \varphi^{\epsilon}_m(x;\xi)d\xi\\
            &=\ \int_{\epsilon^{-1}Y_1^{\prime}}B^{\epsilon}_1(vu^{\epsilon})(\xi)e^{ix\cdot\xi} \varphi^{\epsilon}_1(x;\xi)d\xi + o(\epsilon)\quad\mbox{(by using Proposition }\ref{c7hm2} ).
\end{align*}
Then plugging these into the RHS of \eqref{c7bsint}, we get
\begin{align*}
B_1^{\epsilon}&\lb (-\frac{\partial}{\partial x_k}(b^{\epsilon}_{kl}\frac{\partial}{\partial x_l})vu^{\epsilon}\rb \\
 &=\ \int_{\mathbb{R}^N}\int_{\epsilon^{-1}Y_1^{\prime}}B^{\epsilon}_1(vu^{\epsilon})(\xi)b_{kl}^{\epsilon}(x)(\frac{\partial}{\partial x_k}+ i\xi_k)\varphi_1(\frac{x}{\epsilon};\epsilon\xi)(\frac{\partial}{\partial x_l}+i\xi_l)(\overline{\varphi_1}(\frac{x}{\epsilon},\epsilon\xi)) dx\ d\xi + o(\epsilon) 
\end{align*}
Now using the Taylor expansion of $\varphi_1(.;\epsilon\xi)$ at $0$ it is easy to see that the integral will converge to 
\begin{equation*}
\frac{1}{|Y_1|}\int_{Y_1}\lb b_{kl}(ty)(1+\frac{\partial}{\partial y_k}\chi_k(y))(1+\frac{\partial}{\partial y_k}\chi_l(y))\rb \xi_k\xi_l\widehat{vu}
\end{equation*}
when $a^{\epsilon}_{kl} =a_{kl}(\frac{x}{\epsilon})$ and $b_{kl}^{\epsilon} = b_{kl}(\frac{x}{f(\epsilon)})$ with the fact  $\frac{\epsilon}{f(\epsilon)} \rightarrow t \in [0,\infty)$ as $\epsilon \rightarrow 0$,\\
\\
or it converges to
\begin{equation*}
\frac{1}{|Y_2|}\int_{Y_2}\lb b_{kl}(y)(1+\frac{\partial}{\partial y_k}\chi_k(sy))(1+\frac{\partial}{\partial y_k}\chi_l(sy))\rb \xi_k\xi_l\widehat{vu} 
\end{equation*}
when $\frac{f(\epsilon)}{\epsilon} \rightarrow s \in [0,\infty)$ as $\epsilon \rightarrow 0$.
\paragraph{Step 5. Limit of \eqref{c7local} :}
Let $\mathcal{A}^{*} \equiv -\frac{\partial}{\partial x_k}(a^{*}_{kl}\frac{\partial}{\partial x_l})$
and  $\mathcal{B}^{\#} \equiv -\frac{\partial}{\partial x_k}(b^{\#}_{kl}\frac{\partial}{\partial x_l})$ then
\begin{equation}\begin{aligned}\label{c7lch}
\widehat{{\mathcal{A}^{*}(vp)}}(\xi) &- \widehat{\mathcal{B}^{\#}(vu)}(\xi)= - |Y_1|^{-1/2}\int_{\mathbb{R}^N} z_k\frac{\partial v}{\partial x_k}e^{-ix.\xi} dx \\
-(i\xi_k)|Y_1|^{-1/2}a^{*}_{kl}&\int_{\mathbb{R}^N} p\frac{\partial v}{\partial x_l} e^{-ix.\xi} dx +(i\xi_k)|Y_1|^{-1/2}b^{\#}_{kl}\int_{\mathbb{R}^N} u\frac{\partial v}{\partial x_l} e^{-ix.\xi} dx
\end{aligned}\end{equation}
The above equation is to be considered as the localized homogenized equation in the Fourier space. 
The conclusion of the Theorem will follow as a consequence of this equation.
\paragraph{Step 6. Fourier space ($\xi)$ to physical space $(x)$}:\\
\\
We take the inverse Fourier
transform of the localized homogenized equation \eqref{c7lch} to go back to physical space
\begin{equation*}\mathcal{A}^{*}(vp)- \mathcal{B}^{\#}(vu)= - z_k\frac{\partial v}{\partial x_k}
-a^{*}_{kl}\frac{\partial}{\partial x_k}(\frac{\partial v}{\partial x_l}p)  
+b^{\#}_{kl}\frac{\partial}{\partial x_k}(\frac{\partial v}{\partial x_l}u)
\quad\mbox{in }\mathbb{R}^N \end{equation*}
or,
\begin{equation*}v\lb \frac{\partial}{\partial x_k } z - \frac{\partial}{\partial x_k} (A^{*}\nabla p -B^{\#}\nabla u ) \rb = \lb z-  (A^{*}\nabla p -B^{\#}\nabla u ) \rb \frac{\partial v}{\partial x_k} \end{equation*}
As this relation holds for all $v\in D(\Omega)$, so the desired conclusion follows. 
Indeed, let us choose $v(x) = v_0(x)e^{inx·\omega}$ where $\omega$ is a unit vector in $\mathbb{R}^N$ and $v_0 \in D(\Omega)$ is fixed.
Letting $n\rightarrow \infty$ in the resulting relation and varying the unit vector $\omega$, we can easily deduce 
successively that $z_k = a^{*}_{kl}\frac{\partial p}{\partial x_l} - b^{\#}_{kl}\frac{\partial u}{\partial x_l}$ and $\mathcal{A}^{*}p = \mathcal{B}^{\#}u$ in $\Omega$.
This completes our discussion of the theorem of Homogenization result.
\hfill\end{proof}
%The analogous proof will go through for the Theorem \ref{c7hst} for Hashin-Shtrikman microstructures. 

\paragraph{Acknowledgement :}
This work has been carried out within a project supported by the 
Airbus Group Corporate Foundation Chair ``Mathematics of Complex Systems'' 
established at Tata Institute Of fundamental Research (TIFR) - Centre for Applicable Mathematics.

\bibliographystyle{plain}
\bibliography{Master_bibfile}
\end{document}